\newcommand*{\mailto}[1]{\href{mailto:#1}{\nolinkurl{#1}}}
\def\th#1{\vspace{1mm}\noindent{\bf #1}\quad}
\newcommand{\beq}{\begin{equation}}
	\newcommand{\eeq}{\end{equation}}
\newcommand{\ba}{\begin{align}}
	\newcommand{\ea}{\end{align}}
\renewcommand{\ln}{\text{\rm ln}}
\numberwithin{equation}{section}
\newtheorem{theorem}{Theorem}[section]
\newtheorem{lemma}[theorem]{Lemma}
\newtheorem{corollary}[theorem]{Corollary}
\theoremstyle{definition}
\newtheorem{definition}[theorem]{Definition}
\newtheorem{proposition}[theorem]{Proposition}
\newtheorem{remark}{Remark}[section]
\begin{document}

	\title[Higher-order inverse problems]
	{Borg-type theorem for a class of higher-order differential operators}
	
\author[A.~W.~Guan]{AI-WEI GUAN}
\address{Department of Mathematics, School of Mathematics and Statistics, Nanjing University of
	Science and Technology, Nanjing, 210094, Jiangsu, People's
	Republic of China}
\email{\mailto{guan.ivy@njust.edu.cn}}

\author[D.~J.~Wu]{DONG-JIE WU}
\address{Department of Mathematics, School of Mathematics and Statistics, Nanjing University of
	Science and Technology, Nanjing, 210094, Jiangsu, People's
	Republic of China}
\email{\mailto{wudongjie@njust.edu.cn}}

\author[C.~F.~Yang*]{CHUAN-FU Yang*}
\address{Department of Mathematics, School of Mathematics and Statistics, Nanjing University of
	Science and Technology, Nanjing, 210094, Jiangsu, People's
	Republic of China}
\email{\mailto{chuanfuyang@njust.edu.cn}}

\author[N.~P.~Bondarenko]{NATALIA P. BONDARENKO}
\address{S.M. Nikolskii Mathematical Institute, RUDN University, 6 Miklukho-Maklaya St, Moscow, 117198, Russian Federation.}
\email{\mailto{bondarenkonp@sgu.ru}}

	\subjclass[2020]{34A55, 34B24, 47E05}
	\keywords{Higher-order differential operators, Inverse spectral problem, Borg-type theorem, Riesz basis.}
	\date{\today}
	\footnote{* Corresponding author} 
	
	\begin{abstract}
		{In this paper, we study an inverse spectral operator for the higher-order differential equation $(-1)^my^{(2m)}+ q y = \lambda y$, where $q \in L^2(0,\pi)$.
		We prove that if $\|q\|_2$ is sufficiently small, the two spectra corresponding to the both Dirichlet boundary conditions and to the Dirichlet-Neumann ones uniquely determine the potential $q$. The result extends the Borg theorem from the second order to all even higher orders.}
	\end{abstract}
	
	\maketitle

\section{Introduction}

This paper deals with an inverse spectral problem for the higher-order operators generated by the differential equation
\begin{equation}\label{1}
	(-1)^my^{(2m)}(x)+ q(x) y(x) = \lambda y(x), \quad x \in (0,\pi),\;\;\;\;m\geq 1,
\end{equation}
with the following boundary conditions:
\begin{align}\label{boud1}
	\text{Dirichlet}: \quad & y^{(2i)}(0)=y^{(2i)}(\pi)=0,\;\;\; 0\leq i\leq m-1, \\ \label{boud2}
	\text{Dirichlet-Neumann}: \quad	& y^{(2i)}(0)=y^{(2i+1)}(\pi)=0,\;\;\; 0\leq i\leq m-1,
\end{align}
where $q(\cdot)$ belongs to the class $L^{2}(0,\pi)$ of square-integrable complex-valued functions on $(0,\pi)$, and $\lambda$ is the spectral parameter.

Inverse problems of spectral analysis, which aim to 
recover coefficient(s) of differential operators from their spectral data, have been a cornerstone of mathematical analysis since the pioneering work of Ambarzumian \cite{Am}, followed by those of Borg \cite{Bor}, Levitan \cite{Lev1}, and Marchenko \cite{Mar}. Among these fundamental results, Borg's theorem \cite{Bor} stands out for its elegant solution to the uniqueness question for the inverse problem for second-order Sturm-Liouville operators. It says that, for $m = 1$, the potential $q$ is uniquely determined by the eigenvalues of the two problems \eqref{1},~\eqref{boud1} and \eqref{1},~\eqref{boud2}.
This theorem not only advanced spectral theory but also laid the groundwork for applications ranging from quantum mechanics to geophysical inversion. Despite its significant influence, Borg's theorem has historically been limited to second-order differential operators, raising the question of whether analogous results hold for higher orders. 

The most complete results in the inverse problem
theory were obtained for the second-order operators (see, e.g., the monographs by Levitan \cite{Lev1}, Marchenko \cite{Mar}, Freiling and Yurko \cite{Fre}, Kravchenko \cite{Kra}, and references therein). Spectral theory of higher-order differential operators causes interest of scholars because of applications in various physical contexts, such as modeling flows of viscous liquids in thin membranes and vibrations of elastic beams (see \cite{Ber, Gre, Mik, Pol, Tu}). Higher-order differential operators are inherently more difficult to investigate than second-order
ones, which makes it complicated to generalize Borg's theorem to $m > 1$. However, in recent years mathematicians have witnessed some advancements in this area. The generalization of Borg's uniqueness result \cite{Bor} to the fourth order was considered by Barcilon \cite{Bar1}. He claimed that two coefficients $p$ and $q$ of the equation
$$
	y^{(4)} - (p(x) y')' + q(x) y = \lambda y
$$
can be uniquely determined by three spectra. However, the proof of uniqueness presented in \cite{Bar1} was subsequently found to be incorrect \cite{Bon} by Bondarenko. A valid proof was later provided by Guan et al.\cite{Guan} under the so-called separation condition.
			
Inverse spectral theory for higher-order differential operators of the form
\begin{equation}\label{high}
	\mathcal{L}y = y^{(n)} + \sum_{k=0}^{n-2}p_k(x)y^{(k)}, \quad n \geq 2,
\end{equation}
has been systematically developed by Yurko using the method of spectral mappings~\cite{Yur}. In the case of piecewise analytic coefficients, the inverse problem was further investigated by Yurko via the method of standard models, as detailed in \cite[Section~4]{Yur3}. For \eqref{high} of even order $n = 2m$, Yurko established a significant uniqueness result: given $2m-2$ coefficients, the remaining coefficient can be uniquely reconstructed using two spectra associated with the boundary conditions
\begin{equation*}
	\begin{cases}
		y^{(j)}(0) = 0, & j = 0,1,\ldots,2m-2 \\
		y^{(k)}(\pi) = 0, & k = 0,1.
	\end{cases}
\end{equation*}
Nevertheless, the corresponding inverse problem for operators with $L^2$-coefficients remains unresolved in this framework.
 
Khachatryan \cite{Kh} obtained a remarkable uniqueness result for the reduced operator
\begin{equation*}
	\mathcal{L}_0y = y^{(2m)} + p_0(x)y
\end{equation*}
with vanishing middle coefficients ($p_k \equiv 0$ for $k=1,\ldots,n-2$). It was shown that, when the potential possesses the symmetry: $p_0(x) = p_0(\pi-x)$ and has a sufficiently small $L^2$-norm, the Dirichlet spectrum alone uniquely determines $p_0$ in $L^2(0,\pi)$. Furthermore, Guan et al. \cite{AW} recently demonstrated that if $\|p_0\|_2$ is sufficiently small, then the Dirichlet spectrum and Dirichlet-Neumann spectrum uniquely determine the potential of the fourth-order differential operator in the non-symmetric case. The present paper further extends the results of \cite{Kh} and \cite{AW} by eliminating the symmetry constraint on $p_0$ through an innovative application of two-spectra reconstruction for arbitrary even order. It is worth mentioning that, in general, two spectra are insufficient for recovering higher-order differential operators. Uniqueness and existence for solution of higher-order inverse spectral problems by a larger amount of spectral data were obtained in \cite{Sa, Be,Mal, Yur, Yur1, Yur2, Yur3} and other studies.

Let us proceed to formulating our main result.
Denote by $L_1(q)$ and $L_2(q)$ the operators generated by equation \eqref{1} with the Dirichlet \eqref{boud1} and the Dirichlet-Neumann \eqref{boud2} boundary conditions, respectively.
 Let $\{{\lambda}_n (q)\}_{n \ge 1}$ and $\{{\mu}_n (q)\}_{n\geq1}$ be the spectra of the operators $L_1(q)$ and $L_2(q)$, respectively.
Denote by $B(0,\varepsilon)$ the ball of radius $\varepsilon>0$ about $0$ in $L^2(0,\pi)$.
Our uniqueness theorem is formulated as follows.
\begin{theorem}\label{thm}
	There exists an $\varepsilon>0$ such that, if $q_1$, $q_2\in B(0,\varepsilon)$, $\lambda_n(q_1)=\lambda_n(q_2)$ and $\mu_n(q_1)={\mu}_n(q_2)$ for all $n\geq 1$, then
	$q_1(x)=q_2(x)$ a.e. on $(0,\pi)$ .
\end{theorem}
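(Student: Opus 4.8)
The plan is to set up the problem via an analysis of the characteristic functions whose zeros are the eigenvalues, and then to exploit the smallness of $\|q\|_2$ through a linearization-plus-contraction argument. First I would construct, for each $q \in L^2(0,\pi)$, fundamental solutions of \eqref{1} and express the characteristic functions $\Delta_1(\lambda,q)$ and $\Delta_2(\lambda,q)$ whose zero sets are $\{\lambda_n(q)\}$ and $\{\mu_n(q)\}$, respectively; for $q=0$ these reduce to explicit entire functions built from the $2m$-th roots of $\lambda$ (trigonometric/exponential combinations), whose zeros are $\lambda_n(0)=n^{2m}$ and the corresponding Dirichlet–Neumann values $\mu_n(0)$, all simple and well-separated asymptotically like $n^{2m}$. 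I would then record the standard perturbation estimates showing $\Delta_j(\lambda,q)=\Delta_j(\lambda,0)+O(\|q\|_2)$ uniformly on suitable contours around each unperturbed eigenvalue, so that for $\|q\|_2<\varepsilon$ each $\lambda_n(q)$ (resp. $\mu_n(q)$) is close to its unperturbed counterpart and is a simple zero.

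The heart of the argument is a linearization of the eigenvalue maps $q \mapsto \lambda_n(q)$ and $q \mapsto \mu_n(q)$ around $q=0$. By the analytic dependence of $\Delta_j$ on $q$ and the implicit function theorem, each eigenvalue is an analytic functional of $q$ near $0$, with Fréchet derivative at $0$ given (up to normalization) by $\int_0^\pi q(x)\,\psi_n^{(j)}(x)\,dx$, where $\psi_n^{(j)}$ is the square of (or an appropriate bilinear expression in) the relevant unperturbed eigenfunction. For the Dirichlet problem this kernel is $\sin^2(nx)=\tfrac12(1-\cos 2nx)$ and for the Dirichlet–Neumann problem it is an analogous $\sin^2$ or $\cos^2$ expression at the half-integer frequencies; together the families $\{1-\cos 2nx\}_{n\ge1}$ and the Dirichlet–Neumann kernels span a set of frequencies that, after removing the common constant, recovers all Fourier modes of $q$ on $(0,\pi)$. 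I would show that the combined linearized map $q \mapsto \big(\partial_q\lambda_n(0)[q],\,\partial_q\mu_n(0)[q]\big)_{n\ge1}$ is a bounded linear bijection (an isomorphism onto an appropriate sequence space), using the Riesz-basis property of these trigonometric systems advertised in the keywords.

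Granting that isomorphism, I would close the argument by a quantitative inverse-function/contraction scheme: writing $\lambda_n(q)=\lambda_n(0)+L_n(q)+R_n(q)$ with $L_n$ linear and $R_n(q)=O(\|q\|_2^2)$ (and similarly for $\mu_n$), the hypothesis $\lambda_n(q_1)=\lambda_n(q_2)$, $\mu_n(q_1)=\mu_n(q_2)$ for all $n$ becomes $L(q_1-q_2) = R(q_2)-R(q_1)$, and since $L$ is boundedly invertible while the right-hand side is $O\big((\|q_1\|_2+\|q_2\|_2)\|q_1-q_2\|_2\big)$, choosing $\varepsilon$ small forces $\|q_1-q_2\|_2 \le \tfrac12\|q_1-q_2\|_2$, hence $q_1=q_2$ a.e. The main obstacle I anticipate is the middle step: proving that the linearized spectral map is an isomorphism. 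This requires (i) identifying the derivative kernels precisely, which for higher order $2m$ is more delicate than in the classical case because the unperturbed "eigenfunctions" are genuine only for the self-adjoint-looking combinations and one must track which boundary conditions actually produce the clean $\sin$/$\cos$ system; and (ii) establishing that the union of the Dirichlet and Dirichlet–Neumann frequency systems forms a Riesz basis (or at least a complete minimal system with a bounded inverse) in $L^2(0,\pi)$, together with uniform-in-$n$ control of the remainders $R_n$ so that the perturbation series converges in the right norm on the whole sequence space. Careful bookkeeping of the $n$-dependent normalizing factors (the eigenvalues grow like $n^{2m}$, so the derivatives and remainders must be rescaled consistently) will be the technically demanding part.
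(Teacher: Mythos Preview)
Your approach is valid in principle but genuinely different from the paper's. The paper does not linearize the eigenvalue maps or run an inverse-function/contraction argument. Instead it exploits the \emph{exact} orthogonality identities
\[
\int_0^\pi (q_1-q_2)\,\phi_n(x;q_1)\phi_n(x;q_2)\,dx=0,\qquad
\int_0^\pi (q_1-q_2)\,\psi_n(x;q_1)\psi_n(x;q_2)\,dx=0,
\]
obtained directly from $\lambda_n(q_1)=\lambda_n(q_2)$, $\mu_n(q_1)=\mu_n(q_2)$ via integration by parts, together with $\int_0^\pi(q_1-q_2)=0$ from the eigenvalue asymptotics. The core technical work is then to show that the system $\{1\}\cup\{\tfrac{1}{\pi}-\phi_n(\cdot;q_1)\phi_n(\cdot;q_2),\ \tfrac{1}{\pi}-\psi_n(\cdot;q_1)\psi_n(\cdot;q_2)\}_{n\ge1}$ is a Riesz basis of $L^2(0,\pi)$ for $q_1,q_2\in B(0,\varepsilon)$, which the paper does by proving sharp uniform asymptotics $\phi_n(x;q)=\sqrt{2/\pi}\sin(nx)+O(n^{1-2m}\ln n)$ (and similarly for $\psi_n$) through Green-function estimates on contours $\Gamma_n$, so that the products are $\ell^2$-close to the orthonormal cosine basis. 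Your route trades eigenfunction asymptotics for uniform-in-$n$ quadratic remainder bounds on the eigenvalue functionals; this is more modular and in the Sturm--Liouville case is how one gets local real-analyticity of the inverse map, but it carries extra bookkeeping (identifying the right sequence space on which $L$ is an isomorphism, given that the mean of $q$ appears in every coordinate, and proving $R_n(q_1)-R_n(q_2)$ is $\ell^2$-summable with the right Lipschitz constant). The paper's route sidesteps all remainder estimation on eigenvalues because the orthogonality relations are exact; the price is the eigenfunction asymptotics, which it obtains anyway and which are of independent interest.
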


The proof of Theorem~\ref{thm} is based on the reconstruction of a Riesz basis, following the approach developed in \cite{AW, Bor, SC1}. However, the details involved will be more difficult than those for the second and fourth orders.
We compose a special Riesz basis by using the eigenfunctions of the four operators $L_k(q_i)$, where $i,k = 1, 2$. To achieve this, we study asymptotic properties of the eigenfunctions of the problems $L_k(q)$, $k = 1, 2$, and derive uniform estimates for the remainder terms. It is worth mentioning that the asymptotics of spectral projections, which imply the eigenfunction asymptotics, for the problem $L_1(q)$ were previously obtained by Polyakov\cite{Pol2, Po}. However, in this paper, we adopt new methods by expanding and estimating the Green functions. This approach yields a more accurate estimation of the eigenfunction, which may have applications in various areas of spectral theory.

This paper is organized as follows. In Sect.~\ref{sec:prelim}, the eigenvalues and eigenfunctions of the unperturbed operators are presented, the basic facts regarding Green functions and Riesz bases are given. Sect.~\ref{sec:esth} is focused on the derivation of the asymptotics for eigenfunctions. In Sect.~\ref{sec:proof}, the proof of Theorem~\ref{thm} is provided.

\section{Preliminaries} \label{sec:prelim}

We introduce some basic definitions and notations.

\subsection{Eigenfunctions and eigenvalues of unperturbed operators}

For $q \in L^2(0,\pi)$ and $n \ge 1$, denote by $\phi_n(x; q)$ and $\psi_n(x; q)$ the normalized eigenfunctions corresponding to the eigenvalues $\lambda_n(q)$ and $\mu_n(q)$ of the operators $L_1(q)$ and $L_2(q)$, respectively.

For zero potential, the eigenfunctions and the eigenvalues can be presented in the explicit form:
\begin{equation*}
	{\phi}_{n}(x;0)=\sqrt{\frac{2}{\pi}} \sin(n x), 
\end{equation*}
\begin{equation*} \label{psia}	        
{\psi}_{n}(x;0)=\sqrt{\frac{2}{\pi}} \sin\bigg(\bigg(n-\frac{1}{2}\bigg) x\bigg)
\end{equation*}
and
\begin{equation*}
	\lambda_n(0)=n^{2m},
\end{equation*}
\begin{equation*} \label{muna}	       
	\mu_n(0)=\bigg(n-\frac{1}{2}\bigg)^{2m}.
\end{equation*}

For the eigenvalues of \eqref{1} under the condition \eqref{boud1}, it was established in \cite{Akh} that if $q\in L^2(0,\pi),$ then 
\begin{equation}\label{lambda}
	\lambda_{n}=n^{2m}+\frac{1}{\pi}\int_{0}^{\pi}q(x)dx-\frac{1}{\pi}\int_{0}^{\pi}q(x)\cos 2nxdx+O\bigg(\frac{1}{n^2}\bigg).
\end{equation}

\subsection{Green functions}
Let $G^i(x,y;\lambda),i=1,2$ be the Green functions associated with \eqref{1} under the boundary conditions \eqref{boud1} and \eqref{boud2}, respectively. By definition, $G^i(x,y;\lambda)$ are the integral kernels of the operators $(\lambda-L_i)^{-1}$. If the Dirichlet eigenvalues $\{\lambda_{j}\}_{j\geq 1}$ and the Dirichlet-Neumann eigenvalues $\{\mu_{j}\}_{j\geq 1}$ are simple, using the eigenfunction expansion, we can easily get (see \cite{Nai, Sc}): 
\begin{align}\label{G}
	G^1(x,y;\lambda)=\sum_{j=1}^{\infty}\frac{\phi_{j}(x)\phi_{j}(y)}{\lambda-\lambda_{j}},\;\;\;\;
	G^2(x,y;\lambda)=\sum_{j=1}^{\infty}\frac{\psi_{j}(x)\psi_{j}(y)}{\lambda-\mu_{j}}.
\end{align}

In the unperturbed case, the Green functions become
\begin{equation}\label{K1}
	K^1(x,y;\lambda)=\frac{2}{\pi}\sum_{n=0}^{\infty}\frac{\sin (nx) \sin (ny)}{\lambda-n^{2m}},
\end{equation}
and
\begin{equation*}
K^2(x,y;\lambda)=\frac{2}{\pi}\sum_{n=0}^{\infty}\frac{\sin \big(\big(n-\frac{1}{2}\big)x\big) \sin\big(\big(n-\frac{1}{2}\big)y\big)}{\lambda-\big(n-\frac{1}{2}\big)^{2m}}.
\end{equation*}

Moreover, for $i=1,2$,
\begin{equation}\label{green fun}
	G^i(x,y;\lambda)=\sum_{j=0}^{\infty}(-1)^{j}G_{j}^i(x,y;\lambda).
\end{equation}
where 
\begin{equation}\label{green1}
	\begin{split}
		G_{0}^i(x,y;\lambda)&=K^i(x,y;\lambda),\\
		G_{j}^i(x,y;\lambda)&=\int_{0}^{\pi}K^i(x,\xi;\lambda)q(\xi)G_{j-1}^i(\xi,y;\lambda)d\xi,\;\;j\geq 1.
	\end{split}
\end{equation}
Notice that if $j\geq 1$, $G_{j}^i(x,y;\lambda)$ also can be written as
\begin{equation*}
	\begin{split}
		G_{j}^i(x,y;\lambda)\!\!=\!\!\!\int_{0}^{\pi}\dots\int_{0}^{\pi} \!\!\!K^1(x,\xi_{1};\lambda)q(\xi_{1})K^1(\xi_{1},\xi_{2};\lambda)\dots q(\xi_{j})K^1(\xi_{j},y;\lambda)d\xi_{1}\dots d\xi_{j}.
	\end{split}
\end{equation*}

\subsection{Riesz bases}
\begin{definition}
	A set of vectors $\{e_n\}_{n\geq0}$ is called a Riesz basis of a separable Hilbert space $\mathcal{H}$ if 
	
	\begin{itemize}\label{Rie}
		\item $\{ e_n \}_{n \ge 0}$ is complete in $\mathcal{H}$, i.e., if $(u,e_n)=0$ for all $n$ and some $u \in\mathcal{H} $, then $u=0$;
		\item the vectors $\{ e_n \}_{n \ge 0}$ are independent, i.e., if $\sum_{n=0}^{\infty}
		c_ne_n=0$ for $\{c_n\}\in l^2$, then $c_n=0$ for all $n$;
		\item there exist strictly positive constants $a$ and $A $, such that
		the following inequality holds for every $u \in \mathcal H$: $$a\|u\|^2\leq\sum_{n=0}^{\infty}|(u,e_n)|^2\leq A\|u\|^2.$$
	\end{itemize}
\end{definition}

An important fact is that a small perturbation of a Riesz basis is also a Riesz basis: 

\begin{proposition}\label{le3}\cite{SC1}
	Let $\left\{e_n\right\}_{n \ge 0}$ be a Riesz basis of a separable Hilbert space $\mathcal{H}$. Let $\{ d_n \}_{n \ge 0}$ be a collection of vectors in $\mathcal{H}$ such that 
	\begin{equation}\label{*}
		\sum_{n=0}^{\infty}\left\|d_n-e_n\right\|^2<\frac{a^2}{A} ,
	\end{equation}
	where $a$ and $A$ are defined in definition \ref{Rie}. Then, the vector set $\left\{d_n\right\}_{n \ge 0}$ forms a Riesz basis of $\mathcal{H}$. 	
\end{proposition}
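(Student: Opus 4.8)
The plan is to reduce the statement to an elementary Neumann-series argument for the synthesis operator of the Riesz basis $\{e_n\}_{n\ge 0}$. Write $\{f_n\}_{n\ge 0}$ for the standard orthonormal basis of $\ell^2$ and let $U\colon \ell^2 \to \mathcal H$ be the linear map determined by $Uf_n = e_n$. The first step is to extract from Definition~\ref{Rie} that $U$ is a well-defined bounded bijection with bounded inverse, together with the quantitative bounds $\|U\|\le \sqrt{A}$ and $\|U^{-1}\|\le 1/\sqrt{a}$. Indeed, the upper frame bound says precisely that the analysis operator $u\mapsto\{(u,e_n)\}_{n}$ maps $\mathcal H$ boundedly into $\ell^2$ with norm at most $\sqrt A$; its Hilbert-space adjoint is $U$, so $U$ is bounded with $\|U\|\le\sqrt A$, and in particular $\sum_n c_n e_n$ converges in $\mathcal H$ for every $\{c_n\}\in\ell^2$. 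The lower frame bound says that this analysis operator is bounded below by $\sqrt a$, which forces its adjoint $U$ to be surjective with $\|U^{-1}\|\le 1/\sqrt a$, while the $\ell^2$-independence item in Definition~\ref{Rie} gives injectivity of $U$; the completeness item is then automatic. Note also that $a\le A$, so $\sqrt{a/A}\le 1$.

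Next, introduce the perturbation operator $V\colon \ell^2 \to \mathcal H$ by $Vf_n = d_n - e_n$. A Cauchy-criterion argument together with the Cauchy--Schwarz inequality shows that $\sum_n c_n(d_n-e_n)$ converges in $\mathcal H$ for every $\{c_n\}\in\ell^2$ and that
\[
\Bigl\|\sum_{n=0}^{\infty}c_n(d_n-e_n)\Bigr\|^2 \le \Bigl(\sum_{n=0}^{\infty}\|d_n-e_n\|^2\Bigr)\Bigl(\sum_{n=0}^{\infty}|c_n|^2\Bigr),
\]
so $V$ is bounded with $\|V\|\le\bigl(\sum_n\|d_n-e_n\|^2\bigr)^{1/2} < a/\sqrt A$ by the hypothesis~\eqref{*}. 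Consequently $\|U^{-1}V\|\le\|U^{-1}\|\,\|V\| < (1/\sqrt a)(a/\sqrt A)=\sqrt{a/A}\le 1$, so $I+U^{-1}V$ is invertible on $\ell^2$ by the Neumann series, and therefore
\[
W:=U+V=U\,(I+U^{-1}V),\qquad Wf_n=d_n,
\]
is a bounded linear bijection of $\ell^2$ onto $\mathcal H$ with bounded inverse.

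Finally, I would translate the fact that $W$ is a bounded bijection with bounded inverse back into the three items of Definition~\ref{Rie} for the family $\{d_n\}_{n\ge0}$: surjectivity of $W$ yields completeness of $\{d_n\}$; injectivity of $W$ yields $\ell^2$-independence; and, putting $M:=\|W\|$ and $m:=1/\|W^{-1}\|$, the two-sided estimate $m\|x\|\le\|Wx\|\le M\|x\|$ transfers to the adjoint $W^*$ and gives $m^2\|u\|^2\le\sum_n|(u,d_n)|^2\le M^2\|u\|^2$ for all $u\in\mathcal H$, which are frame bounds for $\{d_n\}$. This shows $\{d_n\}_{n\ge0}$ is a Riesz basis.

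There is no real obstacle here: the only point requiring a little care is the first step, namely checking that the formulation of a Riesz basis in Definition~\ref{Rie} (frame inequality, together with completeness and $\ell^2$-independence) is equivalent to the synthesis operator $U$ being a topological isomorphism $\ell^2\to\mathcal H$ with the stated control on $\|U\|$ and $\|U^{-1}\|$. Once this dictionary is in place, the argument is the classical Paley--Wiener / Bari stability trick, and the constant $a^2/A$ in~\eqref{*} is exactly what guarantees $\|U^{-1}V\|<1$.
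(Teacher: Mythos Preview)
The paper does not actually give a proof of this proposition: it is quoted with a citation to \cite{SC1} and used as a black box, so there is no ``paper's proof'' to compare against. Your argument is correct and is precisely the standard Paley--Wiener/Bari stability proof one finds in the literature (e.g.\ in Christensen \cite{Christ}): pass to the synthesis operator $U\colon\ell^2\to\mathcal H$, use the frame inequalities to bound $\|U\|\le\sqrt A$ and $\|U^{-1}\|\le 1/\sqrt a$, bound the perturbation operator $V$ via Cauchy--Schwarz, and conclude by a Neumann series. The only place worth a small expansion is your claim that the lower frame bound ``forces $\|U^{-1}\|\le 1/\sqrt a$'': strictly speaking the lower bound gives $\|T^{-1}\|\le 1/\sqrt a$ for the \emph{analysis} operator $T$, and you need $T$ to be bijective (surjectivity coming from the $\ell^2$-independence of the $e_n$) before invoking $(T^{-1})^*=(T^*)^{-1}=U^{-1}$ to transfer this to $U$. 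With that caveat made explicit, the proof goes through exactly as you outline.
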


\begin{remark}
	In particular, if  $\{e_n\}_{n\geq0}$ is an orthonormal basis of $\mathcal{H}$ (which is also a Riesz basis of $\mathcal{H}$), then 
	$$
	\|u\|^2=\sum_{n=0}^{\infty}|(u,e_n)|^2. 
	$$
	Consequently, the condition \eqref{*} in Proposition~\ref{le3} becomes 
	$$
	\sum_{n=0}^{\infty}\left\|d_n-e_n\right\|^2<1.
	$$
\end{remark}

More information about Riesz bases can be found in \cite{Christ}.

\section{Estimates for eigenfunctions} \label{sec:esth}
The Green functions are crucial for our asymptotic estimation of eigenfunctions. In this section, we will present the estimates of the Green functions in the unperturbed and perturbed cases, and then apply them to obtain asymptotic formulas for the eigenfunctions.

\subsection{Estimate for free Green function}

In this subsection, we estimate the Green function for the zero potential. Our proof consists of several technical lemmas.

\begin{lemma}\label{4.1}
	For $x>1$, the following inequality holds:
	$$\int_{0}^{x-1}\frac{1}{x^{2m}-t^{2m}}dt<x^{1-2m} \ln x.$$
\end{lemma}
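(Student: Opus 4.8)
The plan is to bound the integrand $\frac{1}{x^{2m}-t^{2m}}$ from above on the interval $(0, x-1)$ by something whose integral is easy to evaluate, and then compare with $x^{1-2m}\ln x$. The key algebraic observation is the factorization $x^{2m}-t^{2m} = (x-t)(x^{2m-1} + x^{2m-2}t + \cdots + t^{2m-1})$. Since $0 \le t < x$, every term in the second factor is positive and at most $x^{2m-1}$; but we want a \emph{lower} bound on that factor, so I would instead simply keep one dominant term, namely $x^{2m-1} \le x^{2m-2}t + \cdots$ is false, so better: the second factor is at least $x^{2m-1}$ only when... it is not, since all terms are $\le x^{2m-1}$. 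The clean route is: the second factor satisfies $\sum_{k=0}^{2m-1} x^{2m-1-k} t^k \ge x^{2m-1}$ because the $k=0$ term alone is $x^{2m-1}$ and the rest are nonnegative. Hence $x^{2m}-t^{2m} \ge (x-t)\,x^{2m-1}$, giving
\[
\int_0^{x-1}\frac{dt}{x^{2m}-t^{2m}} \le \frac{1}{x^{2m-1}}\int_0^{x-1}\frac{dt}{x-t} = \frac{1}{x^{2m-1}}\big(\ln x - \ln 1\big) = x^{1-2m}\ln x.
\]

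The only remaining issue is that this gives $\le$, whereas the lemma asserts strict inequality $<$. That is immediate: the bound $x^{2m}-t^{2m} \ge (x-t)x^{2m-1}$ is strict for every $t \in (0, x-1)$ as soon as $m \ge 1$ and $t > 0$, since then the $k=1$ term $x^{2m-2}t$ is strictly positive; integrating a strict pointwise inequality over an interval of positive length (note $x > 1$ ensures $x - 1 > 0$) yields a strict inequality between the integrals. For $m=1$ one checks directly that $\int_0^{x-1}\frac{dt}{x^2-t^2} = \frac{1}{2x}\ln\frac{2x-1}{1} < \frac{\ln x}{x}$ need not even be needed — the general argument already covers it.

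I do not anticipate a genuine obstacle here; the lemma is a routine estimate. The only point requiring the slightest care is making sure the lower bound on the denominator is used in the correct direction (we need the denominator bounded \emph{below} to bound the fraction \emph{above}), and observing that the antiderivative $-\ln(x-t)$ evaluated from $0$ to $x-1$ produces exactly $\ln x$, which is what makes the constant in the lemma sharp in form. If one wanted to avoid even mentioning strictness subtleties, one could note $\int_0^{x-1}\frac{dt}{x-t} = \ln x$ and that the inequality $\frac{1}{x^{2m}-t^{2m}} < \frac{1}{x^{2m-1}(x-t)}$ fails only on the measure-zero set $\{t=0\}$, so the integral inequality is strict.
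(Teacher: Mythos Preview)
Your proof is correct, and it takes a genuinely different route from the paper's. The paper substitutes $y=t/x$ to write
\[
\int_0^{x-1}\frac{dt}{x^{2m}-t^{2m}}=x^{1-2m}\int_0^{1-1/x}\frac{dy}{1-y^{2m}},
\]
then expands $\frac{1}{1-y^{2m}}=\sum_{n\ge 0}y^{2mn}$, integrates term by term, and bounds the resulting sum $\sum_{n\ge 0}\frac{(1-1/x)^{2mn+1}}{2mn+1}$ by the full Taylor series $\sum_{n\ge 1}\frac{(1-1/x)^n}{n}=\ln x$. You instead use the factorization $x^{2m}-t^{2m}=(x-t)\sum_{k=0}^{2m-1}x^{2m-1-k}t^k\ge (x-t)\,x^{2m-1}$ and integrate $\frac{1}{x-t}$ directly. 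Your argument is shorter and entirely elementary (no series manipulations), and the strictness observation is handled cleanly. The paper's approach, on the other hand, fits naturally with the power-series bookkeeping used again in the companion Lemma~3.2, so there is some uniformity gained by their method; but for this lemma in isolation your factorization argument is the more transparent one.
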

\begin{proof}
	Let $y=\frac{t}{x}$, then we have
\begin{equation}\label{eq1}
	\int_{0}^{x-1}\frac{1}{x^{2m}-t^{2m}}dt=\frac{1}{x^{2m}}\int_{0}^{x-1}\frac{1}{1-(\frac{t}{x})^{2m}}dt=\frac{x}{x^{2m}}\int_{0}^{1-\frac{1}{x}}\frac{1}{1-y^{2m}}dy.
\end{equation}
Since the function $\dfrac{1}{1-y^{2m}}$ equals $\sum_{n=0}^{\infty}y^{2mn}$ and is analytic on $[0, 1-\frac{1}{x}]$, we get
\begin{equation}\label{eq2}
	\int_{0}^{1-\frac{1}{x}}\frac{1}{1-y^{2m}}dy=\sum_{n=0}^{\infty}\frac{y^{2mn+1}}{2mn+1}\bigg|_{0}^{1-\frac{1}{x}}=\sum_{n=0}^{\infty}\frac{(1-\frac{1}{x})^{2mn+1}}{2mn+1}<\sum_{n=1}^{\infty}\frac{(1-\frac{1}{x})^{n}}{n}=\ln x.
\end{equation}
Using \eqref{eq1} and \eqref{eq2}, one can find
$$\int_{0}^{x-1}\frac{1}{x^{2m}-t^{2m}}dt<x^{1-2m}\ln x.$$
\end{proof}

\begin{lemma}\label{ta2}
	For $x>0$, the following relation holds:
$$	\int_{x+1}^{\infty}\frac{1}{t^{2m}-[(x+1)^{2m}-\frac{1}{2}(x+1)^{2m-1}]}dt \leq(x+1)^{1-2m}\ln x.$$
\end{lemma}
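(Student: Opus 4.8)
The plan is to normalize the integral by a scaling substitution, reducing the claim to a one-variable inequality of the same shape as Lemma~\ref{4.1}, and then to quote that lemma. First I would set $a:=(x+1)^{2m}-\tfrac12(x+1)^{2m-1}=(x+1)^{2m-1}\bigl(x+\tfrac12\bigr)$ and record that $0<a<(x+1)^{2m}$, so the integrand $1/(t^{2m}-a)$ is positive on $[x+1,\infty)$ and integrable there (comparison with $t^{-2m}$ at infinity). Substituting $t=(x+1)s$ and using
\[
\frac{a}{(x+1)^{2m}}=\frac{x+\tfrac12}{x+1}=1-\frac{1}{2(x+1)}=:b\in(0,1),
\]
one gets $t^{2m}-a=(x+1)^{2m}(s^{2m}-b)$, hence
\[
\int_{x+1}^{\infty}\frac{dt}{t^{2m}-a}=(x+1)^{1-2m}\int_{1}^{\infty}\frac{ds}{s^{2m}-b},
\]
and it remains to prove $\int_{1}^{\infty}(s^{2m}-b)^{-1}\,ds\le\ln x$, where $1-b=\tfrac1{2(x+1)}$.

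For this reduced inequality I would invert, $s\mapsto 1/\sigma$, to obtain $\int_{1}^{\infty}(s^{2m}-b)^{-1}\,ds=\int_{0}^{1}\sigma^{2m-2}(1-b\sigma^{2m})^{-1}\,d\sigma$, then rescale $\sigma=b^{-1/(2m)}\tau$ and drop $\sigma^{2m-2}\le 1$ to bound this by $b^{-1+1/(2m)}\int_{0}^{b^{1/(2m)}}(1-\tau^{2m})^{-1}\,d\tau$. Since $b^{1/(2m)}\le 1-\tfrac{1-b}{2m}=1-\tfrac1{4m(x+1)}$ by Bernoulli's inequality, the last integral is at most $\int_{0}^{1-1/y}(1-\tau^{2m})^{-1}\,d\tau$ with $y=4m(x+1)$, which by the proof of Lemma~\ref{4.1} (namely $\sum_{n\ge0}\frac{(1-1/y)^{2mn+1}}{2mn+1}<\sum_{k\ge1}\frac{(1-1/y)^{k}}{k}=\ln y$) is strictly less than $\ln y$. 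Collecting constants then gives the bound. A self-contained alternative is to split $[x+1,\infty)$ at $2(x+1)$: on $[x+1,2(x+1)]$ convexity of $t\mapsto t^{2m}$ yields $t^{2m}-a\ge(x+1)^{2m-1}\bigl(2m(t-x-1)+\tfrac12\bigr)$, whose reciprocal integrates to $\tfrac1{2m}(x+1)^{1-2m}\ln(4m(x+1)+1)$, while on $[2(x+1),\infty)$ one has $t^{2m}-a\ge\tfrac34 t^{2m}$, whose reciprocal integrates to a constant times $(x+1)^{1-2m}$.

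The hard part is the constant bookkeeping. The integrand peaks at $t=x+1$ with value $2(x+1)^{1-2m}$, and the logarithm in the bound arises entirely from the transition region $t\in[x+1,\,O(x)]$, where the integrand falls from that peak to $O((x+1)^{-2m})$; both routes above produce a logarithm whose argument is a fixed multiple of $x$ (such as $4m(x+1)$) carrying an extra multiplicative factor (such as $b^{-1+1/(2m)}>1$ or the additive constant from the tail). To arrive at exactly $\ln x$ rather than $C\ln x$ one must use that the inequality in Lemma~\ref{4.1} is strict and in fact quite generous — for instance for $m=1$ one has $\int_{0}^{1-1/y}(1-\tau^{2})^{-1}\,d\tau=\tfrac12\ln(2y-1)$, about half of $\ln y$ — so that this slack absorbs the stray constants; turning that observation into a clean quantitative estimate is the delicate step.
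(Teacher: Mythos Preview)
Your proposal does not close: you yourself flag the final ``constant bookkeeping'' as the delicate step and do not carry it out. After dropping $\tau^{2m-2}\le 1$ you arrive at a bound of the form $b^{-1+1/(2m)}\ln\bigl(4m(x+1)\bigr)$, which exceeds $\ln x$, so a vague appeal to slack in Lemma~\ref{4.1} is not a proof.

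The paper's argument is different and avoids this loss altogether. With $J:=(x+1)^{2m}-\tfrac12(x+1)^{2m-1}$ it expands $\dfrac{1}{t^{2m}-J}=\displaystyle\sum_{p\ge 0}J^{p}t^{-2m(p+1)}$ and integrates term by term over $[x+1,\infty)$:
\[
\int_{x+1}^{\infty}\frac{dt}{t^{2m}-J}
=(x+1)^{1-2m}\sum_{p=0}^{\infty}\frac{r^{p}}{2mp+2m-1},\qquad r=\frac{J}{(x+1)^{2m}}=1-\frac{1}{2(x+1)}.
\]
Separating the $p=0$ term and using $2mp+2m-1\ge 2mp$ for $p\ge 1$ gives
\[
(x+1)^{1-2m}\Bigl[\frac{1}{2m-1}+\frac{1}{2m}\sum_{p\ge 1}\frac{r^{p}}{p}\Bigr]
=(x+1)^{1-2m}\Bigl[\frac{1}{2m-1}+\frac{\ln\bigl(2(x+1)\bigr)}{2m}\Bigr],
\]
and the bracket is $\le\ln x$ once $x$ is moderately large. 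The decisive gain over your route is the factor $1/(2m)$ in front of the logarithm. In fact, had you \emph{not} replaced $\tau^{2m-2}$ by $1$ and instead expanded $\int_0^{b^{1/(2m)}}\tau^{2m-2}(1-\tau^{2m})^{-1}\,d\tau$ as a power series, you would have recovered exactly the paper's sum $\sum_{p\ge 0}r^{p}/(2mp+2m-1)$ (with $r=b$); your scaling--inversion detour is equivalent to the direct expansion, and the failure is self-inflicted at the step where you discard $\tau^{2m-2}$.

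One further remark: the inequality as literally stated cannot hold for $0<x\le 1$ (the left side is positive while $\ln x\le 0$). The paper's final ``$\le\ln x$'' is likewise only valid for $x$ sufficiently large; since the lemma is applied only with $x=j\to\infty$ in Lemma~\ref{free}, this is harmless for the application, but neither argument establishes the statement for all $x>0$.
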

\begin{proof}
	Since $0<\frac{(x+1)^{2m}-\frac{1}{2}(x+1)^{2m-1}}{t^{2m}}<1$, we have $$\frac{1}{1-\frac{(x+1)^{2m}-\frac{1}{2}(x+1)^{2m-1}}{t^{2m}}}=\sum_{p=0}^{\infty}\frac{[(x+1)^{2m}-\frac{1}{2}(x+1)^{2m-1}]^p}{t^{2mp}}.$$
	Consequently,
	\begin{align*}
		&\int_{x+1}^{\infty}\frac{1}{t^{2m}-[(x+1)^{2m}-\frac{1}{2}(x+1)^{2m-1}]}dt\nonumber\\&=\int_{x+1}^{\infty}\frac{1}{t^{2m}}\frac{1}{1-\frac{(x+1)^{2m}-\frac{1}{2}(x+1)^{2m-1}}{t^{2m}}}dt\nonumber\\&=\int_{x+1}^{\infty}\frac{1}{t^{2m}}\sum_{p=0}^{\infty}\frac{[(x+1)^{2m}-\frac{1}{2}(x+1)^{2m-1}]^p}{t^{2mp}}dt.
	\end{align*}
	Putting $J=(x+1)^{2m}-\frac{1}{2}(x+1)^{2m-1}$, we get
	\begin{align*}
		\sum_{p=0}^{\infty}\int_{x+1}^{\infty}\frac{J^p}{t^{2m+2mp}}dx&=\sum_{p=0}^{\infty}J^p\frac{t^{1-(2m+2mp)}}{1-(2m+2mp)}\bigg|_{x+1}^{\infty}\nonumber\\&=(x+1)^{1-2m}\sum_{p=0}^{\infty}\frac{J^p (x+1)^{-2mp}}{2m+2mp-1}\nonumber\\&\leq\frac{(x+1)^{1-2m}}{2m-1}+\frac{(x+1)^{1-2m}}{2m}\sum_{p=1}^{\infty}\frac{J^p (x+1)^{-2mp}}{p}\nonumber\\&\leq\frac{(x+1)^{1-2m}}{2m-1}+\frac{(x+1)^{1-2m}}{2m}\sum_{p=1}^{\infty}\frac{(1-2(x+1)^{-1})^p}{p}\nonumber\\&\leq(x+1)^{1-2m}\ln x.
	\end{align*}
\end{proof}

Consider the following family of contours in the complex $\lambda$-plane:
\begin{equation*}
	\Gamma_{j}=\bigg\{\lambda\in \mathbb{C}:|\lambda-j^{2m}|=\frac{1}{2}j^{2m-1}\bigg\},\;\;\;j\in \mathbb{N}.
\end{equation*}
we have the following lemma:

\begin{lemma}\label{free}
	There exists $N\in \mathbb{N}_{+}$, $C>0$, such that if $\lambda\in \Gamma_j$, $j\geq N+1$, then the series in \eqref{K1} converges uniformly on $(x,y,\lambda)\in (0,\pi)\times (0,\pi)\times\Gamma_j$. Moreover, $\big|K^1(x,y;\lambda)\big|\leq C j^{1-2m}\ln j$.
\end{lemma}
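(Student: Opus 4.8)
The plan is to split the series in \eqref{K1} at the index $j$ and to treat the "near-resonant" term $n=j$ together with its immediate neighbours separately from the low- and high-frequency tails, using Lemmas~\ref{4.1} and~\ref{ta2} to control the tails. Fix $\lambda\in\Gamma_j$, so that $\lambda=j^{2m}+\tfrac12 j^{2m-1}e^{i\theta}$ for some $\theta$; in particular $|\lambda-j^{2m}|=\tfrac12 j^{2m-1}$. For the single term $n=j$ we bound $\bigl|\tfrac{2}{\pi}\sin(jx)\sin(jy)\bigr|\le\tfrac{2}{\pi}$ and $|\lambda-j^{2m}|^{-1}=2 j^{1-2m}$, which already contributes $O(j^{1-2m})\le O(j^{1-2m}\ln j)$. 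The main work is to show that for $n\neq j$ one has a lower bound of the form $|\lambda-n^{2m}|\ge c\,|j^{2m}-n^{2m}|$ for $n$ outside a bounded neighbourhood of $j$, and $|\lambda-n^{2m}|\ge c\,j^{2m-1}$ for the finitely many $n$ with $0<|n-j|\le$ const; the first follows because the radius $\tfrac12 j^{2m-1}$ of $\Gamma_j$ is, for large $j$, much smaller than the gap $j^{2m}-(j-1)^{2m}\sim 2m\, j^{2m-1}$, while the second is the statement that $\Gamma_j$ stays a definite distance from each nearby unperturbed eigenvalue $n^{2m}$.

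Granting these lower bounds, I would estimate
\[
\Bigl|K^1(x,y;\lambda)\Bigr|\le \frac{2}{\pi}\sum_{n=0}^{\infty}\frac{1}{|\lambda-n^{2m}|}
\le C\Biggl(\sum_{0\le n<j}\frac{1}{j^{2m}-n^{2m}}+\frac{1}{j^{2m-1}}+\sum_{n>j}\frac{1}{n^{2m}-\bigl(j^{2m}-\tfrac12 j^{2m-1}\bigr)}\Biggr),
\]
where the middle term absorbs the finitely many indices near $j$. The first sum is comparable to $\int_0^{j-1}(j^{2m}-t^{2m})^{-1}\,dt$ plus the term $n=j-1$, which is $O(j^{1-2m})$; by Lemma~\ref{4.1} the integral is at most $j^{1-2m}\ln j$. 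For the last sum, the worst case is when $\lambda$ sits at the point of $\Gamma_j$ closest to the high-frequency eigenvalues, i.e.\ $\Re\lambda$ is as large as $j^{2m}+\tfrac12 j^{2m-1}$; comparing with the integral and invoking Lemma~\ref{ta2} (with the role of $x$ played by $j-1$, so that $(x+1)^{2m}-\tfrac12(x+1)^{2m-1}=j^{2m}-\tfrac12 j^{2m-1}$ is exactly the quantity $J$ there) gives a bound $O(j^{1-2m}\ln j)$ as well. Summing the three pieces yields $|K^1(x,y;\lambda)|\le C j^{1-2m}\ln j$, uniformly in $(x,y)\in(0,\pi)^2$ and $\lambda\in\Gamma_j$; the same bound on the partial sums, together with absolute convergence of the tail, gives the claimed uniform convergence.

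The main obstacle is the bookkeeping near the resonance: one must choose $N$ large enough that (i) for $j\ge N+1$ the radius $\tfrac12 j^{2m-1}$ is smaller than, say, one third of the nearest gap $\min\{j^{2m}-(j-1)^{2m},\,(j+1)^{2m}-j^{2m}\}$, so that $\Gamma_j$ encircles $n^{2m}$ for $n=j$ only and keeps a comparable distance from $n^{2m}$ for $n=j\pm1$, and (ii) the finitely many "boundary" indices are handled by the crude bound $|\lambda-n^{2m}|^{-1}\le C j^{1-2m}$ rather than by the integral comparison. A minor technical point is the passage from the sum $\sum_{0\le n<j}(j^{2m}-n^{2m})^{-1}$ to the integral $\int_0^{j-1}(j^{2m}-t^{2m})^{-1}dt$: since $t\mapsto (j^{2m}-t^{2m})^{-1}$ is increasing on $[0,j)$, each term $(j^{2m}-n^{2m})^{-1}$ with $n\le j-2$ is dominated by $\int_n^{n+1}(j^{2m}-t^{2m})^{-1}dt$, so $\sum_{n=0}^{j-2}(j^{2m}-n^{2m})^{-1}\le\int_0^{j-1}(j^{2m}-t^{2m})^{-1}dt$, and the remaining term $n=j-1$ contributes $(j^{2m}-(j-1)^{2m})^{-1}=O(j^{1-2m})$; an analogous monotonicity argument handles the high-frequency tail before Lemma~\ref{ta2} is applied. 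Everything else is a routine combination of these estimates.
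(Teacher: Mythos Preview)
Your proposal is correct and follows essentially the same three-part decomposition as the paper: the low-frequency tail is controlled via Lemma~\ref{4.1}, a finite near-resonant block is bounded directly by $O(j^{1-2m})$, and the high-frequency tail is controlled via Lemma~\ref{ta2}. One small slip: in your displayed estimate the high-frequency denominator should read $n^{2m}-\bigl(j^{2m}+\tfrac12 j^{2m-1}\bigr)$ (plus, not minus, as you yourself note in the surrounding text), which then forces you to apply Lemma~\ref{ta2} with $x=j$ rather than $x=j-1$ and to absorb $n=j+1$ into the near-resonant block---precisely the choice the paper makes.
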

\begin{proof}
	For $\lambda \in \Gamma_j$, consider the series
\begin{equation}\label{series}
	\sum_{n=1}^{\infty}\frac{1}{|\lambda-n^{2m}|}.
\end{equation} 
Divide \eqref{series} into three parts as follows:
\begin{align}
&\text{(I)}\;\;\;\; \sum_{n=1}^{j-2}\frac{1}{|\lambda-n^{2m}|},\nonumber\\
&\text{(II)}\;\;\;\; \frac{1}{|\lambda-(j-1)^{2m}|}+\frac{1}{|\lambda-j^{2m}|}+\frac{1}{|\lambda-(j+1)^{2m}|},\nonumber\\
&\text{(III)}\;\; \sum_{n=j+2}^{\infty}\frac{1}{|\lambda-n^{2m}|}.\nonumber
\end{align}

First, consider part (I). For $\lambda \in \Gamma_j$, we have
\begin{equation}\label{4.4}
	\sum_{n=1}^{j-2}\frac{1}{|\lambda-n^{2m}|}\leq\sum_{n=1}^{j-2}\frac{1}{|(j-1)^{2m}-n^{2m}|}\leq\int_{0}^{j-2}\frac{1}{(j-1)^{2m}-t^{2m}}dt.
\end{equation}
Lemma \ref{4.1} yields that
\begin{equation}\label{4.5}
	\int_{0}^{j-2}\frac{1}{(j-1)^{2m}-t^{2m}}dt\leq (j-1)^{1-2m}\ln(j-1).
\end{equation}
From \eqref{4.4} and \eqref{4.5}, we obtain
\begin{equation}\label{4.6}
	\sum_{n=1}^{j-2}\frac{1}{|\lambda-n^{2m}|}\leq (j-1)^{1-2m}ln(j-1)\leq C_1 j^{1-2m}\ln j,
\end{equation}
where $C_1$ is a positive constant.

Next, consider part (II). For $\lambda \in \Gamma_j$ and sufficiently large $j$, we have
\begin{align}\label{4.7}
	&\frac{1}{|\lambda-(j-1)^{2m}|}+\frac{1}{|\lambda-j^{2m}|}+\frac{1}{|\lambda-(j+1)^{2m}|}\nonumber\\&\leq\frac{1}{\frac{1}{2}(j-1)^{2m-1}}+\frac{1}{\frac{1}{2}j^{2m-1}}+\frac{1}{\frac{1}{2}(j+1)^{2m-1}}\leq 6(j-1)^{1-2m}\leq C_2 j^{1-2m},
\end{align} 
where $C_2$ is a positive constant.

Next, consider part (III). For $\lambda \in \Gamma_j$, by Lemma \ref{ta2} we get
\begin{align*}
	\sum_{n=j+2}^{\infty}\frac{1}{|\lambda-n^{2m}|}&\leq\sum_{n=j+2}^{\infty}\frac{1}{n^{2m}-[(j+1)^{2m}-\frac{1}{2}(j+1)^{2m-1}]}\nonumber\\&\leq\int_{j+1}^{\infty}\frac{1}{x^{2m}-[(j+1)^{2m}-\frac{1}{2}(j+1)^{2m-1}]}dx\nonumber\\
&\leq(j+1)^{1-2m}\ln j.
\end{align*}
This yields
\begin{equation}\label{4.11}
	\sum_{n=j+2}^{\infty}\frac{1}{|\lambda-n^{2m}|}\leq(j+1)^{1-2m}lnj\leq C_3j^{1-2m}\ln j,
\end{equation}
where $C_3$ is a positive constant.

Choosing $C=\text{max}\{C_1,C_2,C_3\}$, for $\lambda \in \Gamma_j$ and sufficiently large $j$, by \eqref{4.6}, \eqref{4.7} and \eqref{4.11}, we have
\begin{equation*}
	\sum_{n=1}^{\infty}\frac{1}{|\lambda-n^{2m}|}\leq C j^{1-2m}\ln j.
\end{equation*}
Hence, the series $$\sum_{n=1}^{\infty}\frac{\sin(nx)\sin(ny)}{\lambda-n^{2m}}$$
converges uniformly on $(x,y,\lambda)\in (0,\pi)\times (0,\pi)\times\Gamma_j$. Therefore, $\big|K^1(x,y;\lambda)\big|\leq C j^{1-2m}\ln j$.
\end{proof}

\subsection{Estimates for perturbed Green function and for eigenfunctions}
 	
We begin with an auxiliary lemma.
 
 	\begin{lemma}\label{one}
 		There exists $N\in \mathbb{N}_{+}$, such that there is only one eigenvalue of $L_1(q)$ in  $ \Gamma_n$, $n\geq N+1$.
 	\end{lemma}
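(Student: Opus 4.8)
The plan is to use a Rouché-type argument: count the eigenvalues of $L_1(q)$ inside $\Gamma_n$ by comparing the resolvent of $L_1(q)$ with that of the unperturbed operator. Recall that for $\lambda\in\Gamma_n$ the unperturbed Green function satisfies $|K^1(x,y;\lambda)|\leq Cn^{1-2m}\ln n$ by Lemma~\ref{free}, while the distance from $\Gamma_n$ to the unperturbed spectrum $\{k^{2m}\}$ is exactly $\tfrac12 n^{2m-1}$, so the operator $(\lambda-L_1(0))^{-1}$ on $L^2(0,\pi)$ has norm $\leq 2n^{1-2m}$, with a single simple pole at $\lambda=n^{2m}$ enclosed by $\Gamma_n$. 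First I would write the perturbed resolvent via the Neumann series \eqref{green fun}--\eqref{green1}, i.e. $(\lambda-L_1(q))^{-1}=\sum_{j\geq0}(-1)^j(\lambda-L_1(0))^{-1}\big(q(\lambda-L_1(0))^{-1}\big)^j$, and check that this series converges uniformly on $\Gamma_n$ once $n\geq N+1$: each factor $q(\lambda-L_1(0))^{-1}$ has operator norm bounded by $\|q\|_\infty$ times the bound on the free resolvent — or, working with the explicit kernel estimate of Lemma~\ref{free}, one gets that the $j$-th kernel $G_j^1$ is bounded by $(C\|q\|_{?}\, n^{1-2m}\ln n)^{j+1}\cdot(\text{const})$, which tends to $0$ geometrically. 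Hence for $n$ large, $G^1(x,y;\lambda)$ is analytic in $\lambda$ on and inside $\Gamma_n$ except for the poles coming from eigenvalues of $L_1(q)$ in that disk.

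The key step is then to localize the eigenvalues. The number of eigenvalues of $L_1(q)$ inside $\Gamma_n$ (counted with algebraic multiplicity) equals
\[
\frac{1}{2\pi i}\oint_{\Gamma_n}\Tr\big[(\lambda-L_1(q))^{-1}\big]\,d\lambda
=\frac{1}{2\pi i}\oint_{\Gamma_n}\sum_{j=0}^{\infty}(-1)^j\int_0^\pi G_j^1(x,x;\lambda)\,dx\,d\lambda.
\]
The $j=0$ term is $\frac{1}{2\pi i}\oint_{\Gamma_n}\int_0^\pi K^1(x,x;\lambda)\,dx\,d\lambda$, which counts the unperturbed eigenvalues in $\Gamma_n$ and equals $1$ since only $n^{2m}$ lies inside $\Gamma_n$ (here I use the known eigenvalue asymptotics \eqref{lambda}, or simply $\lambda_k(0)=k^{2m}$, to see that consecutive unperturbed eigenvalues are separated by more than the diameter $n^{2m-1}$ of $\Gamma_n$ for large $n$). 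For the terms $j\geq1$, I would bound $\big|\oint_{\Gamma_n}\int_0^\pi G_j^1(x,x;\lambda)\,dx\,d\lambda\big|\leq \mathrm{length}(\Gamma_n)\cdot\pi\cdot\sup_{\Gamma_n}|G_j^1|\leq \pi^2 n^{2m-1}\cdot (C_q n^{1-2m}\ln n)^{j+1}$, and summing the geometric series in $j\geq1$ gives a total bounded by $C_q'\,(\ln n)^2\,n^{1-2m}\to 0$ as $n\to\infty$. Since the eigenvalue count is an integer that differs from $1$ by something tending to $0$, it must equal $1$ for all $n\geq N+1$; in particular the unique eigenvalue in $\Gamma_n$ is algebraically simple.

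The main obstacle I anticipate is making the kernel estimates for the iterated Green functions $G_j^1$ genuinely uniform in $\lambda\in\Gamma_n$ and summable in $j$, since $q\in L^2$ need not be bounded: one cannot simply pull $\|q\|_\infty$ out. The remedy is to estimate $\int_0^\pi |K^1(x,\xi;\lambda)|\,|q(\xi)|\,d\xi$ using the Cauchy--Schwarz inequality together with the pointwise bound $|K^1|\leq Cn^{1-2m}\ln n$ from Lemma~\ref{free}, so that each convolution against $q$ contributes a factor $C n^{1-2m}\ln n\cdot\sqrt\pi\,\|q\|_2$; this keeps the series geometric with ratio $O(n^{1-2m}\ln n)\to0$, at the cost of only the smallness already built into Lemma~\ref{free} for large $n$ (the global smallness $q\in B(0,\varepsilon)$ is not even needed here, only for later parts of the paper). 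A secondary technical point is justifying termwise integration of the Neumann series against $d\lambda$ and $dx$, which follows from the established uniform convergence on the compact set $\Gamma_n\times[0,\pi]^2$.
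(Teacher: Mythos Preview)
Your argument is correct, but it is a substantially heavier route than the paper's. The paper's proof is two lines: it observes that the disks bounded by the $\Gamma_n$ are pairwise disjoint (because $(n+1)^{2m}-n^{2m}>\tfrac12(n+1)^{2m-1}+\tfrac12 n^{2m-1}$) and then invokes the eigenvalue asymptotics \eqref{lambda} already quoted in Section~\ref{sec:prelim} from \cite{Akh}: since $\lambda_n(q)=n^{2m}+O(1)$, one has $|\lambda_n(q)-n^{2m}|<\tfrac12 n^{2m-1}$ for large $n$, so each $\lambda_n(q)$ lies inside its own $\Gamma_n$ and, by disjointness, no other eigenvalue does. Your Rouch\'e/trace-projection argument, by contrast, is self-contained in that it does not rely on the externally cited asymptotics \eqref{lambda}; it also delivers algebraic simplicity of the enclosed eigenvalue as a by-product, and it reuses precisely the kernel bound of Lemma~\ref{free} and the Neumann expansion \eqref{green fun}--\eqref{green1} that the paper exploits anyway in the proof of Theorem~\ref{prop}. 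Both approaches are valid: the paper's is a short appeal to known eigenvalue asymptotics plus elementary geometry, yours is an independent spectral-counting proof. If you keep your version, you may drop the parenthetical reference to \eqref{lambda}, since for the $j=0$ term only the unperturbed eigenvalues $\lambda_k(0)=k^{2m}$ are needed; and it is cleaner to phrase the count as $\operatorname{rank}$ of the Riesz projection $P_n=\frac{1}{2\pi i}\oint_{\Gamma_n}(\lambda-L_1(q))^{-1}d\lambda$ rather than via the trace, since rank stability under a norm perturbation smaller than $1$ is immediate and sidesteps any discussion of trace-class issues.
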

 \begin{proof}
 A direct calculation shows that
 	\begin{equation*}
 		(n+1)^{2m}-n^{2m}\!\!=\!\!(n+1)^{2m-1}+(n+1)^{2m-2}n+(n+1)^{2m-3}n^2+\dots+(n+1)n^{2m-2}+n^{2m-1}.
 	\end{equation*}
 Since
 	\begin{equation*}
 		(n+1)^{2m}-n^{2m}>(n+1)^{2m-1},
 	\end{equation*}
 we have
 	\begin{equation*}
 		(n+1)^{2m}-n^{2m}>\frac{1}{2}(n+1)^{2m-1}+\frac{1}{2}n^{2m-1}.
 	\end{equation*}
 	Therefore, the contours $\Gamma_n$ are disjoint. Taking the asymptotic formula \eqref{lambda} into account, we prove the lemma.
 \end{proof}

The main result of this section is the following theorem on the asymptotics of the eigenfunctions for the problems $L_1(q)$ and $L_2(q)$.

\begin{theorem}\label{prop}
Let  $M>0$. Then, there exists a positive integer $P_1$, depending only on  $M$, such that, for any $q  \in B(0, M)$ and $n>P_1$, there hold
\begin{align}\label{a1}
&    \sup _{x \in[0,\pi]}\bigg| \phi_n(x;q)- \sqrt{\frac{2}{\pi}}\sin (n x)\bigg| \leq  C \frac{\ln n}{n^{2m-1}},\\
		\label{a2}
&	\sup _{x \in[0,\pi]}\bigg| \psi_n(x;q)- \sqrt{\frac{2}{\pi}}\sin \bigg(\bigg(n-\frac{1}{2}\bigg) x\bigg)\bigg| \leq  C \frac{\ln n}{n^{2m-1}}, 
\end{align}	
where $C$ is a positive constant depending only on  $M$.
\end{theorem}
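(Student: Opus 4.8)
The plan is to obtain \eqref{a1}--\eqref{a2} from the asymptotics of the spectral projections, which in turn follow from the Neumann expansion of the Green functions. Fix $M>0$ and $q\in B(0,M)$, and let $n$ be large (depending only on $M$). By Lemma~\ref{one} the circle $\Gamma_n$ encloses exactly one eigenvalue of $L_1(q)$, and since the unperturbed eigenvalue $n^{2m}$ is simple, a standard perturbation argument shows that the Riesz projection $P_n=\tfrac1{2\pi i}\oint_{\Gamma_n}(\lambda-L_1(q))^{-1}\,d\lambda$ has rank one; by \eqref{G} its integral kernel is $\phi_n(x;q)\phi_n(y;q)$, with the normalization $\int_0^\pi\phi_n^2\,dx=1$ built into \eqref{G}. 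Inserting the Neumann expansion \eqref{green fun}--\eqref{green1} of $G^1$ and integrating term by term — legitimate because, by Lemma~\ref{free}, $\big(\sup_{\lambda\in\Gamma_n,\,x,\xi}|K^1(x,\xi;\lambda)|\big)\,\|q\|_{L^1(0,\pi)}\le Cn^{1-2m}\ln n\cdot\sqrt\pi M<1$ for $n$ large, so the series converges uniformly on $\Gamma_n\times[0,\pi]^2$ — we get
\[
 \phi_n(x;q)\phi_n(y;q)=\sum_{j\ge0}(-1)^j\,\frac1{2\pi i}\oint_{\Gamma_n}G_j^1(x,y;\lambda)\,d\lambda .
\]

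The $j=0$ term is computed by residues: since $\Gamma_n$ encircles only the pole $\lambda=n^{2m}$ of \eqref{K1}, it equals $\tfrac2\pi\sin(nx)\sin(ny)$. For $j\ge1$ I would split
\[
 K^1(x,y;\lambda)=\tfrac2\pi\,\frac{\sin(nx)\sin(ny)}{\lambda-n^{2m}}+\widehat K_n(x,y;\lambda),
\]
where $\widehat K_n$, being the sum of \eqref{K1} over all indices $\ne n$, is holomorphic in $\lambda$ on a neighbourhood of the closed disc bounded by $\Gamma_n$ and satisfies $|\widehat K_n(x,y;\lambda)|\le Cn^{1-2m}\ln n$ there (this follows from the estimates in the proof of Lemma~\ref{free}, the $\ln n$ coming only from parts (I) and (III)). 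Expanding the product of the $j+1$ factors $K^1$ inside $G_j^1$ into resonant and holomorphic parts, the contour integral annihilates any term with no resonant factor (holomorphic interior) as well as any term whose resonant factors give a pole of order $\ge2$ without an accompanying holomorphic factor; in the remaining terms Cauchy's formula replaces each resonant factor $(\lambda-n^{2m})^{-1}$ by evaluation (or a derivative) at $\lambda=n^{2m}$. The leading contribution comes from terms with exactly one resonant factor and $j$ factors $\widehat K_n(\,\cdot\,;n^{2m})$, each of size $O(n^{1-2m}\ln n)$, which together with the $j$ factors of $q$ ($\|q\|_{L^1}\le\sqrt\pi M$) are of size $O\big((CM\,n^{1-2m}\ln n)^j\big)$; all other surviving terms are smaller. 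Summing over $j\ge1$ (geometric for $n$ large) gives
\[
 \phi_n(x;q)\phi_n(y;q)=\tfrac2\pi\sin(nx)\sin(ny)+R_n(x,y),\qquad\sup_{x,y\in[0,\pi]}|R_n(x,y)|\le C\,\frac{\ln n}{n^{2m-1}} .
\]

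From this kernel asymptotics, \eqref{a1} follows by point evaluation. Choose $y_n=\pi/(2n)\in(0,\pi)$, so $\sin(ny_n)=1$; putting $x=y=y_n$ gives $\phi_n(y_n)^2=\tfrac2\pi+O(\ln n/n^{2m-1})$, hence $\phi_n(y_n)=\pm\sqrt{2/\pi}\,\big(1+O(\ln n/n^{2m-1})\big)$, and we fix the (otherwise free) sign of $\phi_n$ by requiring this to match the sign of $\sqrt{2/\pi}\sin(ny_n)$. Dividing the last display at $y=y_n$ by $\phi_n(y_n)$ then yields $\phi_n(x;q)=\sqrt{2/\pi}\sin(nx)+O(\ln n/n^{2m-1})$ uniformly in $x\in[0,\pi]$, which is \eqref{a1}. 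The estimate \eqref{a2} is obtained in exactly the same way after replacing: the kernel $K^1$ by $K^2$; the circles $\Gamma_n$ by the circles of radius $\tfrac12(n-\tfrac12)^{2m-1}$ centred at $(n-\tfrac12)^{2m}$; Lemmas~\ref{4.1}--\ref{one} by their evident analogues with $n^{2m}$ replaced by $(n-\tfrac12)^{2m}$; and the base point by $y_n=\pi/(2n-1)$.

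The steps I expect to be most delicate are: (i) the assertion that $P_n$ is rank one, which presupposes that $L_1(q)$ (and $L_2(q)$) has compact resolvent with eventually algebraically simple eigenvalues — this must be deduced from \eqref{lambda}, from the simplicity of the free eigenvalues $n^{2m}$, and a perturbation argument; (ii) the resonant/holomorphic bookkeeping for general $j$, carried out uniformly in $x,y$ and $\lambda\in\Gamma_n$ and with the sharp power of $\ln n$ — this is the technical heart of the argument, and the decomposition $K^1=\text{(resonant)}+\widehat K_n$ with $\widehat K_n$ holomorphic inside $\Gamma_n$ is precisely what recovers one factor of $\ln n$ compared with the crude bound $\big|\oint_{\Gamma_n}G_j^1\big|\le\tfrac1{2\pi}\,\mathrm{length}(\Gamma_n)\sup_{\Gamma_n}|G_j^1|$; and (iii) establishing the $L_2$-analogues of Lemmas~\ref{4.1}--\ref{one}, which should go through verbatim with the obvious changes.
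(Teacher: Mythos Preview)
Your proposal is correct and follows essentially the same route as the paper: Neumann-expand $G^1$, integrate over $\Gamma_n$ to obtain $\phi_n(x)\phi_n(y)=\tfrac{2}{\pi}\sin(nx)\sin(ny)+O(n^{1-2m}\ln n)$, then evaluate at $y=\pi/(2n)$, fix the sign, and divide. The only difference is that the paper performs your resonant/holomorphic splitting solely for $j=1$ (computing that residue explicitly as a term bounded by $\|q\|\sum_{k\ne n}|k^{2m}-n^{2m}|^{-1}\le Cn^{1-2m}\ln n$), while for $j\ge2$ the crude length-times-sup bound already gives $\tfrac{1}{2}n^{2m-1}\cdot(Cn^{1-2m}\ln n)\cdot\delta_n^{\,j}=O(\ln n\cdot\delta_n^{\,2})=o(n^{1-2m}\ln n)$; so your step~(ii), which you flag as the technical heart, is in fact considerably lighter than you anticipate.
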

\begin{proof}
We only prove \eqref{a1}. The proof of \eqref{a2} is similar and so omitted.

If $n$ is sufficiently large, by \eqref{green fun}, we have
$$
\frac{1}{2 \pi i} \oint_{\Gamma_n} G^1(x, y ; \lambda) d \lambda=\sum_{j=0}^{\infty}(-1)^j \frac{1}{2 \pi i} \oint_{\Gamma_n} G_j^1(x, y ; \lambda) d \lambda.
$$

Using  \eqref{G}, \eqref{green1} and Lemma \ref{one}, we get
$$
\phi_n(x) \phi_n(y)=\frac{2}{\pi} \sin (n x) \sin (n y)-\sum_{j=1}^{\infty}(-1)^j \frac{1}{2 \pi i} \oint_{\Gamma_n} G_j^1(x, y ; \lambda) d \lambda.
$$
Using \eqref{green1} and Lemma \ref{free}, we obtain 
$$
\left|\sum_{j=2}^{\infty}(-1)^j \frac{1}{2 \pi i} \oint_{\Gamma_n} G_j^1(x, y ; \lambda) d \lambda\right| \leq  Cn^{2m-1} \sum_{j=2}^{\infty} \delta_n^j=75 Cn^{2m-1} \frac{\delta_n^2}{1-\delta_n},
$$
where $\delta_n=C n^{1-2m}\|q\|\ln n$. Choosing a sufficiently large $n$, we get $\delta_n\leq\frac{1}{2}$. Thus, there is a constant $M$ (depending on $\|q\|$) such that
$$
\left|\sum_{j=2}^{\infty}(-1)^j \frac{1}{2 \pi i} \oint_{\Gamma_n} G_j^1(x, y ; \lambda) d \lambda\right| \leq \frac{M\ln n}{n^{2m-1}}.
$$
By \eqref{green1}, we have
$$
\frac{1}{2 \pi i} \oint_{\Gamma_n} G_1^1(x, y ;\lambda) d \lambda=\frac{1}{2 \pi i} \oint_{\Gamma_n} \int_0^\pi K^1(x, \xi ; \lambda) q(\xi) K^1(\xi, y ; \lambda) d \xi d \lambda.
$$
The eigenfunction expansion of $K^1(x, y ;\lambda)$ together with the residue theorem give

$$
\frac{1}{2 \pi i}\! \!\oint_{\Gamma_n}\!\!\! \!G_1^1(x, \!y ; \!\lambda) d \lambda\!\!=\!\!-\frac{2}{\pi}\!\! \int_0^\pi \!\!\!\!\sin n \xi\!\left[K^{1n}\!\left(\xi,\! y ; n^{2m}\right) \sin n x\!+\!K^{1n}\!\left(x, \!\xi ; n^{2m}\right)\! \sin n y\right]\! q(\xi) d \xi,
$$
where 
$$
K^{1n}(x, y, \lambda)=\frac{2}{\pi} \sum_{\substack{j=1 \\ j \neq n}}^{\infty} \frac{\sin (j x) \sin (j y)}{\lambda-j^{2m}}.
$$
It follows that

$$
	\left|\frac{1}{2 \pi i} \oint_{\Gamma_n} G_1^1(x, y ; \lambda) d \lambda\right| \leq \frac{8}{\pi}\|q\| \sum_{\substack{j=1 \\ j \neq n}}^{\infty} \frac{1}{\left|j^{2m}-n^{2m}\right|}\leq C n^{1-2m}\ln n,
$$
  where the last inequality uses the same calculation as in Lemma \ref{free}.\\
Therefore, $$
\phi_n(x) \phi_n(y)=\frac{2}{\pi} \sin (n x) \sin (n y)+O(n^{1-2m}\ln n).
$$
Letting  $x=y=x_n=\frac{\pi}{2n}$,
we have 
\begin{equation}\label{41}
	\bigg|\phi_n^2(x_n)-\frac{2}{\pi}\bigg| \leq C n^{1-2m}\ln n.
\end{equation}
Clearly, there is a $P_1>0$ such that
$$
\bigg|\phi_n^2(x_n)-\frac{2}{\pi}\bigg| \leq \frac{1}{\pi},\;\;\; n \geq P_1.
$$
Thus,
$$
\frac{1}{4} \leq |\phi_n^2(x_n)|.
$$
 We now choose $\phi_n(x)$ so that $\phi_n(x_n)>0$. Therefore,
$$
\frac{1}{2} \leq \phi_n(x_n).
$$
Using the inequality 
$$
|a-b| \leq\left|a^2-b^2\right|
$$
for $a, b \geq 1 / 2$ and
taking \eqref{41} into account, we obtain $$\bigg|\phi_n(x_n)-\sqrt{\frac{2}{\pi}}\bigg|\leq Cn^{1-2m}\ln n.$$
Letting $y=\frac{\pi}{2n}$, we get $$\phi_n(x)\phi_n(x_n)=\frac{2}{\pi} \sin (n x) +O(n^{1-2m}\ln n).
$$
Therefore $$\begin{aligned}
\phi_n(x)&=\frac{1}{\phi_n(x_n)}\bigg(\frac{2}{\pi} \sin (n x) +O(n^{1-2m}\ln n)\bigg)\\&=\sqrt{\frac{\pi}{2}}(1+O(n^{1-2m}\ln n))\bigg(\frac{2}{\pi} \sin (n x) +O(n^{1-2m}\ln n)\bigg)\\&=\sqrt{\frac{\pi}{2}}\sin (n x)+O(n^{1-2m}\ln n).
\end{aligned}$$
\end{proof}

\begin{remark}
	Polyakov \cite{Pol2, Po} derived the asymptotic formulas for the spectral projections of the problem $L_1(q)$, which imply the estimate 
	$$
	\sup _{x \in[0,\pi]} \left| \phi_n(x;q_1)- \sqrt{\frac{2}{\pi}}\sin (n x) \right| =O\left(\frac{1}{n^{2m-3/2}}\right). 
	$$
	We have deduced a more precise estimate \eqref{a1} by employing an alternative approach, which can also be applied to other boundary conditions.
\end{remark}

\begin{corollary}\label{4}
	Let  $M>0$. Then, there exists a positive integer $P_1$, depending only on  $M$, such that, for any $q_1,q_2  \in B(0, M)$ and $n>P_1$, there hold
	\begin{align}\label{a11}
		&    \sup _{x \in[0,\pi]}| \phi_n(x;q_1)\phi_n(x;q_2)- {\frac{2}{\pi}}\sin^2 (n x)| \leq  C \frac{\ln n}{n^{2m-1}},\\
		\label{a22}
		&	\sup _{x \in[0,\pi]}\bigg| \psi_n(x;q_1) \psi_n(x;q_2)- {\frac{2}{\pi}}\sin^2 \bigg(\bigg(n-\frac{1}{2}\bigg) x\bigg)\bigg| \leq  C \frac{\ln n}{n^{2m-1}}, 
	\end{align}	
	where $C$ is a positive constant depending only on  $M$.
\end{corollary}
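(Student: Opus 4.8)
The plan is to deduce Corollary~\ref{4} directly from Theorem~\ref{prop} by an elementary algebraic manipulation, using the uniform bound on the eigenfunctions that Theorem~\ref{prop} itself yields. Writing $a_i(x) = \phi_n(x;q_i)$ and $b(x) = \sqrt{2/\pi}\,\sin(nx)$ for brevity, I would start from the pointwise identity
\begin{equation*}
a_1 a_2 - b^2 = a_1(a_2 - b) + b(a_1 - b),
\end{equation*}
so that it suffices to control each of the two products on the right-hand side uniformly in $x$.

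First I would observe that Theorem~\ref{prop} gives, for $q_i \in B(0,M)$ and $n > P_1$,
\begin{equation*}
\sup_{x\in[0,\pi]} |a_i(x)| \le \sqrt{\tfrac{2}{\pi}} + C\frac{\ln n}{n^{2m-1}} \le 2\sqrt{\tfrac{2}{\pi}},
\end{equation*}
where the last inequality holds after possibly enlarging $P_1$ (which still depends only on $M$, since $C$ does). Combining this with the trivial bound $\sup_x |b(x)| \le \sqrt{2/\pi}$ and with the estimates $\sup_x |a_i(x) - b(x)| \le C \ln n / n^{2m-1}$ supplied by \eqref{a1}, I obtain
\begin{equation*}
\sup_{x\in[0,\pi]} |a_1(x) a_2(x) - b(x)^2| \le 2\sqrt{\tfrac{2}{\pi}}\, C\frac{\ln n}{n^{2m-1}} + \sqrt{\tfrac{2}{\pi}}\, C\frac{\ln n}{n^{2m-1}} = 3\sqrt{\tfrac{2}{\pi}}\, C\frac{\ln n}{n^{2m-1}},
\end{equation*}
which is \eqref{a11} with a new constant depending only on $M$. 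The proof of \eqref{a22} is verbatim the same, using \eqref{a2} in place of \eqref{a1} and $\sqrt{2/\pi}\,\sin((n-\tfrac12)x)$ in place of $b$.

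The one point that genuinely needs checking — and which I would highlight — is the \emph{consistency of the sign normalization} between the two potentials. In the proof of Theorem~\ref{prop} the eigenfunction $\phi_n(\cdot\,;q)$ is pinned down by requiring $\phi_n(x_n;q) > 0$ at $x_n = \pi/(2n)$; since $b(x_n) = \sqrt{2/\pi}\,\sin(\pi/2) = \sqrt{2/\pi} > 0$, this convention forces both $a_1$ and $a_2$ to have the \emph{same} leading term $b$, so that the product really approximates $(2/\pi)\sin^2(nx)$ and not its negative. The analogous remark applies to $\psi_n$, with $x_n$ replaced by the point used to normalize $\psi_n$ in the proof of \eqref{a2}. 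Apart from this bookkeeping, there is no substantive obstacle: the corollary is a routine consequence of Theorem~\ref{prop}.
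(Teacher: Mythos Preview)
Your proof is correct and follows essentially the same approach as the paper: the paper simply multiplies out $\bigl(\sqrt{2/\pi}\sin(nx)+O(n^{1-2m}\ln n)\bigr)\bigl(\sqrt{2/\pi}\sin(nx)+O(n^{1-2m}\ln n)\bigr)$ and reads off the result, which is exactly your identity $a_1a_2-b^2=a_1(a_2-b)+b(a_1-b)$ done implicitly. Your explicit remark on the sign normalization is a nice clarification that the paper leaves tacit.
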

\begin{proof}
Using Theorem~\ref{prop} and direct calculations, we get	
\begin{align*}
	\nonumber\phi_n(x;q_1)\phi_n(x;q_2)&=\bigg(\sqrt{\frac{2}{\pi}}\sin (n x)+O\bigg(\frac{\ln n}{n^{2m-1}}\bigg)\bigg)\bigg(\sqrt{\frac{2}{\pi}}\sin (n x)+O\bigg(\frac{\ln n}{n^{2m-1}}\bigg)\bigg)\\
	&={\frac{2}{\pi}}\sin^2 (n x) +O\bigg(\frac{\ln n}{n^{2m-1}}\bigg).	
\end{align*}	
The proof of \eqref{a22} is similar. 
\end{proof}

\begin{remark}
	In fact, the estimations
	\begin{align*}
		&    \sup _{x \in[0,\pi]} \left| \phi_n(x;q_1)\phi_n(x;q_2)- {\frac{2}{\pi}}\sin^2 (n x) \right| = O(n^{-2}),\\
		&	\sup _{x \in[0,\pi]}\bigg| \psi_n(x;q_1) \psi_n(x;q_2)- {\frac{2}{\pi}}\sin^2 \bigg(\bigg(n-\frac{1}{2}\bigg) x\bigg)\bigg| = O(n^{-2}) 
	\end{align*}	 
    are sufficient to prove Theorem \ref{thm}.
\end{remark}

\section{Proof of Theorem \ref{thm}} \label{sec:proof}

The proof of Theorem \ref{thm} hinges on the following  observations.

\begin{proposition}\label{le2}\cite{Ko}
The functions $\phi_n(x;\cdot)$ and $\psi_n(x;\cdot)$ are continuous with respect to $q$ at zero uniformly by $x \in [0,\pi]$, that is,
	\begin{align*}
	\sup_{x \in[0,\pi]}| \phi_n(x;q)- \phi_n(x;0)| \rightarrow 0, \quad \sup_{x \in[0,\pi]}| \psi_n(x;q)- \psi_n(x;0)| \rightarrow 0
	\end{align*}
as $q$ tends to zero in $L^2(0,\pi)$.
\end{proposition}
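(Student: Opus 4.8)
The plan is, for each fixed $n$, to reduce the statement to the convergence of the rank-one Riesz projection onto the $n$-th eigenspace of $L_1(q)$ as $q\to 0$, and then to extract the uniform convergence of the eigenfunction from the uniform convergence of the corresponding integral kernel, exactly as in the last part of the proof of Theorem~\ref{prop}. We treat $\phi_n$; the argument for $\psi_n$ is identical after replacing $K^1$ by $K^2$ and $n$ by $n-\tfrac12$ (and $\Gamma_n$ by the corresponding small circle around $(n-\tfrac12)^{2m}$).

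Fix $n\ge 1$. As noted in the proof of Lemma~\ref{one}, the contour $\Gamma_n$ isolates $n^{2m}$ from all other points $j^{2m}$, $j\ne n$; hence $\mathrm{dist}\bigl(\Gamma_n,\{j^{2m}\}_{j\ge 0}\bigr)>0$, the series \eqref{K1} converges uniformly on $[0,\pi]^2\times\Gamma_n$, and $K^1$ is bounded there, say $|K^1(x,y;\lambda)|\le c_n$. This is the fixed-$n$ substitute for Lemma~\ref{free}; no uniformity in $n$ is needed here. Next, for $\|q\|_2$ small enough (depending on $n$) I would check that $\Gamma_n$ encloses exactly one eigenvalue $\lambda_n(q)$ of $L_1(q)$ and that it is algebraically simple — the fixed-$n$, small-$q$ analogue of Lemma~\ref{one}. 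This follows from the Neumann series $(\lambda-L_1(q))^{-1}=(\lambda-L_1(0))^{-1}\sum_{k\ge 0}\bigl(q\,(\lambda-L_1(0))^{-1}\bigr)^{k}$, which is valid on $\Gamma_n$ because $(\lambda-L_1(0))^{-1}$ maps $L^2(0,\pi)$ boundedly into $C[0,\pi]$ for $\lambda\in\Gamma_n$, so $\|q\,(\lambda-L_1(0))^{-1}\|_{L^2\to L^2}\le C_n\|q\|_2<1$; the resolvent of $L_1(q)$ thus exists and is analytic in $q$ on $\Gamma_n$, so $\dim\mathrm{ran}\,\frac{1}{2\pi i}\oint_{\Gamma_n}(\lambda-L_1(q))^{-1}\,d\lambda$ stays equal to its value $1$ at $q=0$. (Alternatively, one may invoke Rouché's theorem for the characteristic function of \eqref{1}, \eqref{boud1}.)

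Granted simplicity, the Green-function expansion \eqref{green fun}--\eqref{green1} and the residue computation already performed in the proof of Theorem~\ref{prop} give
\[
\phi_n(x;q)\phi_n(y;q)=\frac{2}{\pi}\sin(nx)\sin(ny)-\sum_{j\ge 1}(-1)^j\frac{1}{2\pi i}\oint_{\Gamma_n}G_j^1(x,y;\lambda)\,d\lambda ,
\]
and from $|K^1|\le c_n$ on $\Gamma_n$ together with \eqref{green1} one gets $\bigl|\oint_{\Gamma_n}G_j^1\,d\lambda\bigr|\le C_n(c_n\|q\|_2)^{j}$, so the sum over $j\ge 1$ is $O(\|q\|_2)$ uniformly in $(x,y)\in[0,\pi]^2$; hence
\[
\sup_{x,y\in[0,\pi]}\Bigl|\phi_n(x;q)\phi_n(y;q)-\tfrac{2}{\pi}\sin(nx)\sin(ny)\Bigr|\longrightarrow 0\quad\text{as }\|q\|_2\to 0 .
\]
To pass to $\phi_n$ itself, put $x_n=\tfrac{\pi}{2n}$, so $\sin(nx_n)=1$; taking $x=y=x_n$ gives $\phi_n(x_n;q)^2\to\tfrac{2}{\pi}$, so $|\phi_n(x_n;q)|\ge\tfrac12$ for small $\|q\|_2$, and after normalizing $\phi_n(\cdot;q)$ so that $\Re\,\phi_n(x_n;q)>0$ we get $\phi_n(x_n;q)\to\sqrt{2/\pi}$. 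Taking $y=x_n$ in the product estimate and dividing by $\phi_n(x_n;q)$ yields $\sup_{x\in[0,\pi]}\bigl|\phi_n(x;q)-\sqrt{2/\pi}\,\sin(nx)\bigr|\to 0$, i.e.\ $\sup_{x\in[0,\pi]}|\phi_n(x;q)-\phi_n(x;0)|\to 0$, and likewise for $\psi_n$.

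The main obstacle is the perturbation step: one must guarantee that the eigenvalue near $n^{2m}$ remains algebraically simple — so that the Riesz projection is one-dimensional and its kernel factors as $\phi_n(x;q)\phi_n(y;q)$, which is what makes the displayed identity meaningful — for \emph{complex-valued} $q$, where $L_1(q)$ is non-self-adjoint and simplicity cannot be read off from self-adjointness; this is precisely what the Neumann-series (equivalently, Kato analytic-perturbation) argument supplies. Everything else is routine, and it is worth noting that the same computation is uniform in $n$ once $n$ is large, so this proof also reproduces Theorem~\ref{prop}. One could instead prove the proposition more elementarily from the continuous dependence on $(\lambda,q)$ of the solutions of the first-order system equivalent to \eqref{1}, with $q$ ranging over $L^1(0,\pi)\supset L^2(0,\pi)$, plus Rouché's theorem; but the Green-function route is the one consistent with the machinery of this paper.
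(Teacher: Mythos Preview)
Your proof is correct. The paper, however, does not prove Proposition~\ref{le2} at all: it simply cites \cite{Ko} (Kong--Wu--Zettl), which in fact treats second-order Sturm--Liouville problems rather than the $2m$-th order operators considered here, so the citation is not entirely direct. Your argument therefore supplies what the paper lacks --- a self-contained proof within the Green-function framework of Sections~\ref{sec:prelim}--\ref{sec:esth}. Your route is exactly the fixed-$n$ analogue of the proof of Theorem~\ref{prop}: the same Neumann expansion \eqref{green fun}--\eqref{green1}, the same residue computation, and the same factorization trick at $x_n=\pi/(2n)$, with the uniform bound $Cn^{1-2m}\ln n$ of Lemma~\ref{free} replaced by an $n$-dependent constant $c_n$. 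You correctly isolate the one genuinely nontrivial point --- algebraic simplicity of the eigenvalue inside $\Gamma_n$ for complex-valued $q$, without which the kernel of the Riesz projection need not factor as $\phi_n(x)\phi_n(y)$ --- and handle it via the Neumann series and rank stability of the Riesz projection. Two minor remarks: the separation inequality $(n+1)^{2m}-n^{2m}>\tfrac12(n+1)^{2m-1}+\tfrac12 n^{2m-1}$ used in Lemma~\ref{one} in fact holds for every $n\ge 1$, so your appeal to it for all $n$ is legitimate; and the normalization underlying \eqref{G} is the bilinear one $\int_0^\pi \phi_n^2=1$, which fixes $\phi_n$ up to sign only, so your phase convention $\Re\,\phi_n(x_n;q)>0$ reduces to the sign choice already made in the proof of Theorem~\ref{prop}.
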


\begin{lemma}\label{riesz}
There exists $\varepsilon>0$ such that, if $q_1,q_2  \in B(0, \varepsilon)$ the system
 $$\big\{1\big\}\cup\bigg\{\sqrt{2\pi}\bigg(\frac{1}{\pi}-\phi_n(x;q_1){\phi}_n(x;q_2)\bigg),\sqrt{2\pi}\bigg(\frac{1}{\pi}-\psi_n(x;q_1){\psi}_n(x;q_2)\bigg)\bigg\}_{n\geq 1}$$  is a Riesz bases in $L^2(0,\pi)$.
\end{lemma}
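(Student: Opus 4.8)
The plan is to build the desired Riesz basis as a small $\ell^2$-perturbation of an explicit orthonormal basis and then invoke Proposition~\ref{le3}. The natural candidate for the reference orthonormal basis of $L^2(0,\pi)$ is
\[
\{e_0\}\cup\{e_n^{(1)},e_n^{(2)}\}_{n\ge 1},\qquad
e_0\equiv\frac{1}{\sqrt{\pi}},\quad
e_n^{(1)}=\sqrt{\tfrac{2}{\pi}}\cos(2nx),\quad
e_n^{(2)}=\sqrt{\tfrac{2}{\pi}}\cos\bigl((2n-1)x\bigr).
\]
Indeed, $\{1/\sqrt\pi\}\cup\{\sqrt{2/\pi}\cos(kx)\}_{k\ge1}$ is the standard orthonormal cosine basis of $L^2(0,\pi)$, and the map $k\mapsto 2n$ (even $k$) and $k\mapsto 2n-1$ (odd $k$) just relabels it. The point of choosing cosines is the double-angle identity: $\tfrac{2}{\pi}\sin^2(nx)=\tfrac{1}{\pi}-\tfrac{1}{\pi}\cos(2nx)$ and $\tfrac{2}{\pi}\sin^2((n-\tfrac12)x)=\tfrac{1}{\pi}-\tfrac{1}{\pi}\cos((2n-1)x)$, so that
\[
\sqrt{2\pi}\Bigl(\tfrac{1}{\pi}-\tfrac{2}{\pi}\sin^2(nx)\Bigr)=\sqrt{\tfrac{2}{\pi}}\cos(2nx)=e_n^{(1)},
\qquad
\sqrt{2\pi}\Bigl(\tfrac{1}{\pi}-\tfrac{2}{\pi}\sin^2((n-\tfrac12)x)\Bigr)=e_n^{(2)}.
\]

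With this identification, set $d_0=e_0=1/\sqrt\pi$ (so $\|d_0-e_0\|=0$) and, for $n\ge1$,
\[
d_n^{(1)}=\sqrt{2\pi}\Bigl(\tfrac{1}{\pi}-\phi_n(x;q_1)\phi_n(x;q_2)\Bigr),\qquad
d_n^{(2)}=\sqrt{2\pi}\Bigl(\tfrac{1}{\pi}-\psi_n(x;q_1)\psi_n(x;q_2)\Bigr).
\]
Then
\[
\|d_n^{(1)}-e_n^{(1)}\|_{L^2}=\sqrt{2\pi}\,\bigl\|\phi_n(\cdot;q_1)\phi_n(\cdot;q_2)-\tfrac{2}{\pi}\sin^2(n\cdot)\bigr\|_{L^2}
\le 2\pi\,\sup_{x}\bigl|\phi_n(x;q_1)\phi_n(x;q_2)-\tfrac{2}{\pi}\sin^2(nx)\bigr|,
\]
and likewise for $d_n^{(2)}$. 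By Corollary~\ref{4}, for $n>P_1$ (with $P_1$ and the constant $C$ depending only on $M$, e.g.\ $M=1$, so that $B(0,\varepsilon)\subset B(0,M)$ once $\varepsilon<1$) these $\sup$-norms are $\le C\ln n/n^{2m-1}$, hence
\[
\sum_{n>P_1}\Bigl(\|d_n^{(1)}-e_n^{(1)}\|^2+\|d_n^{(2)}-e_n^{(2)}\|^2\Bigr)\le 8\pi^2 C^2\sum_{n>P_1}\frac{(\ln n)^2}{n^{4m-2}}<\infty,
\]
since $4m-2\ge 2$. This tail is a fixed finite number; it can be made as small as we like by enlarging $P_1$, but more to the point it is independent of $q_1,q_2\in B(0,M)$.

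It remains to control the finitely many terms $n\le P_1$, and here the small-ball hypothesis enters. By Proposition~\ref{le2}, $\phi_n(x;q)\to\phi_n(x;0)=\sqrt{2/\pi}\sin(nx)$ and $\psi_n(x;q)\to\psi_n(x;0)=\sqrt{2/\pi}\sin((n-\tfrac12)x)$ uniformly in $x$ as $q\to0$ in $L^2$; consequently, for each fixed $n$, $\phi_n(\cdot;q_1)\phi_n(\cdot;q_2)\to\tfrac{2}{\pi}\sin^2(n\cdot)$ and $\psi_n(\cdot;q_1)\psi_n(\cdot;q_2)\to\tfrac{2}{\pi}\sin^2((n-\tfrac12)\cdot)$ in $L^2$ as $q_1,q_2\to0$, i.e.\ $\|d_n^{(1)}-e_n^{(1)}\|,\|d_n^{(2)}-e_n^{(2)}\|\to0$. (One should also note $\{\phi_n(\cdot;q)\}$ are honest normalized eigenfunctions, so that Corollary~\ref{4} is applicable; for $q$ in a small enough ball the eigenvalues $\lambda_n,\mu_n$ are simple by Lemma~\ref{one} and perturbation theory for $n\le P_1$, which makes $\phi_n,\psi_n$ well-defined.) Therefore, given the finite bound $S:=8\pi^2C^2\sum_{n>P_1}(\ln n)^2 n^{2-4m}$ from the tail, choose $\varepsilon\in(0,1)$ so small that for all $q_1,q_2\in B(0,\varepsilon)$,
\[
\sum_{n=1}^{P_1}\Bigl(\|d_n^{(1)}-e_n^{(1)}\|^2+\|d_n^{(2)}-e_n^{(2)}\|^2\Bigr)<1-S,
\]
which is possible because the left side is a finite sum of quantities tending to $0$. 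Then the full sum $\sum\|d_n-e_n\|^2<1$, and since $\{e_0\}\cup\{e_n^{(1)},e_n^{(2)}\}$ is orthonormal, Proposition~\ref{le3} together with the Remark after it gives that $\{d_0\}\cup\{d_n^{(1)},d_n^{(2)}\}_{n\ge1}$ is a Riesz basis of $L^2(0,\pi)$, which is exactly the assertion of the lemma.

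The main obstacle I anticipate is not any single estimate but the bookkeeping needed to make the choice of $\varepsilon$ legitimate: the cutoff $P_1$ (from Corollary~\ref{4}) must be fixed first, using only the \emph{a priori} radius $M$ of some fixed ball containing all admissible $q$; then the tail sum $S<\infty$ is determined; and only afterwards can $\varepsilon$ be shrunk to absorb the finitely many low-frequency terms below $1-S$. One must be careful that $P_1$ and $C$ in Corollary~\ref{4} do not themselves depend on $\varepsilon$ (they depend only on $M$), and that the eigenfunctions $\phi_n,\psi_n$ for $n\le P_1$ are genuinely defined and normalizable for $q$ in the small ball — both points are guaranteed by the earlier results (simplicity via Lemma~\ref{one} and standard analytic perturbation theory, continuity via Proposition~\ref{le2}). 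The relabeling of the cosine basis into even/odd indexed families is elementary but should be stated explicitly so that the correspondence $d_n^{(1)}\leftrightarrow\cos(2nx)$, $d_n^{(2)}\leftrightarrow\cos((2n-1)x)$ is unambiguous.
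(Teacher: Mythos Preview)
Your proof is correct and follows essentially the same strategy as the paper: compare the system to the cosine orthonormal basis via the double-angle identity, bound the tail using Corollary~\ref{4} (with $M=1$), handle the finitely many low-index terms via the continuity in Proposition~\ref{le2}, and conclude by Proposition~\ref{le3}. Two small cosmetic points to tidy up: you should explicitly enlarge $P_1$ (as you note is possible) so that the tail sum $S<1$ before invoking ``choose $\varepsilon$ with head $<1-S$'', and since the lemma is stated with the unnormalized constant $1$ rather than your $d_0=1/\sqrt{\pi}$, remark that rescaling a single vector by a nonzero constant preserves the Riesz-basis property.
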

\begin{proof}
 For $n\geq 1$, denote
 \begin{equation*}
 	e_0=1,
 \end{equation*}
 \begin{equation*}\label{6_1}
 	e_{2n}=\sqrt{\frac{2}{\pi}}\cos (2n x),
 \end{equation*}
 \begin{equation*}\label{6_2}
	e_{2n-1}=\sqrt{\frac{2}{\pi}}\cos ((2n-1) x),
\end{equation*}
and
\begin{equation*}
	d_0=1,
\end{equation*}
\begin{equation*}\label{6_3}
	d_{2n}=\sqrt{2\pi}\bigg(\frac{1}{\pi}-\phi_n(x;q_1)\phi_n(x;q_2)\bigg),
\end{equation*}
\begin{equation*}\label{6_4}
	d_{2n-1}=\sqrt{2\pi}\bigg(\frac{1}{\pi}-\psi_n(x;q_1)\psi_n(x;q_2)\bigg).
\end{equation*}

Note that $\{e_{n}\}_{n\geq 0}=\{1\}\cup\bigg\{\sqrt{\frac{2}{\pi}}\cos (n x)\bigg\}_{n\geq 1}$ is an orthonormal basis in $L^2(0,\pi)$. 

Let $M=1$. By Corollary \ref{4}, there exist $C>0$ and $P_1>0$ such that, for $n>P_1$, we have
$$
\| d_n-e_n \|^2\leq C\left(\frac{\ln n}{n^{2m-1}}\right)^2.
$$  
Choose $P>P_1$ such that
$$
2C\sum\limits_{n=P}^{\infty}\left(\frac{\ln n}{n^{2m-1}}\right)^2< \frac{1}{2}. 
$$
Hence, we have
\begin{align*}
	&\sum_{n=2P+1}^{\infty} \| d_n-e_n \|^2\leq 2C\sum\limits_{n=P}^{\infty}\bigg(\frac{\ln n}{n^{2m-1}}\bigg)^2<\frac{1}{2}.
\end{align*}

Put
$$
\delta=\frac{\sqrt{1+\frac{1}{\sqrt{2\pi P}}}-1}{4}.
$$
By Proposition \ref{le2}, $\phi_n(x;q)$ and $\psi_n(x;q)$ are both continuous dependent on $q$ for $n=\overline{1,P}$. Consequently, there exists $\{\varepsilon_{n}\}_{n=\overline{1,2P}}$ such that,
if $q\in B(0,\varepsilon_{2n})$, $n=\overline{1,P}$, then 
\begin{equation}\label{11}
	\bigg\|\phi_n(x;{q})-\sqrt{\frac{2}{\pi}}\sin (n x)\bigg\|\leq \delta,
\end{equation}
so
\begin{equation}\label{12}
	\|\phi_n(x;q)\|<\delta+\bigg\|\sqrt{\frac{2}{\pi}}\sin (n x)\bigg\|<\delta+1;
\end{equation} 
and, if $q\in B(0,\varepsilon_{2n-1})$, $n=\overline{1,P}$, then
\begin{equation}\label{13}
	\bigg\|\psi_n(x;q)-\sqrt{\frac{2}{\pi}}\sin \bigg(\bigg(n-\frac{1}{2}\bigg) x\bigg)\bigg\|\leq \delta,
\end{equation}
so
\begin{equation}\label{14}
	\|\psi_n(x;q)\|<\delta+\bigg\|\sqrt{\frac{2}{\pi}}\sin \bigg(\bigg(n-\frac{1}{2}\bigg) x\bigg)\bigg\|<\delta+1.
\end{equation}

Choose $\varepsilon_0=\min\{\varepsilon_1, \varepsilon_2,\dots, \varepsilon_{2P}\}$. Let $q_1$, $q_2\in B(0, \varepsilon_0)$. From \eqref{11} and \eqref{12}, we have
\begin{align*}
	&\|d_{2n}-e_{2n}\|\\&=\sqrt{2\pi}\|\phi_n(x;q_1)\phi_n(x;q_2)-\phi_n(x;0)\phi_n(x;0)\|\\
	&\leq\sqrt{2\pi}(\|\phi_n(x;q_2)\|\|\phi_n(x;q_1)-\phi_n(x;0)\|\!+\!\|\phi_n(x;0)\|\|\phi_n(x;q_2)-\phi_n(x;0)\|)\\
	&\leq\sqrt{2\pi}(\delta+1)\big(\|\phi_n(x;q_1)-\phi_n(x;0)\|+\|\phi_n(x;q_2)-\phi_n(x;0)\|\big)\\
	&=\frac{1}{2\sqrt{P}}.
\end{align*}
Analogously, from \eqref{13} and \eqref{14}, we obtain
\begin{align*}
	&\quad\|d_{2n-1}-e_{2n-1}\|\leq \frac{1}{2\sqrt{P}}.
\end{align*}
Thus, if $q_1$, $q_2\in B(0, \varepsilon_0)$, we have
\begin{align*}
	\sum_{n=0}^{2P} \| d_n-e_n \|^2<\sum_{n=1}^{2P} \frac{1}{{4P}}=\frac{1}{2}.
\end{align*}
Set $\varepsilon=\min\{\varepsilon_0, 1 \}$. For $q_1$, $q_2\in B(0,\varepsilon)$, the above calculation implies that
\begin{align*}
	\sum_{n=0}^{\infty} \| d_n-e_n \|^2<1.
\end{align*}
By Proposition \ref{le3}, the sequence $\{d_{n}\}_{n\geq 0}$ is a Riesz basis in $L^2(0,\pi)$. 

\end{proof}

\begin{proof}[Proof of the Theorem \ref{thm}]
Substituting eigenfunctions into equation \eqref{1}, we get the relations
\begin{equation}\label{111}
	(-1)^m\phi_{n}^{(2m)}(x,q_1)+ q_
	1(x) \phi_{n}(x,q_1) = \lambda_n(q_1) \phi_{n}(x,q_1),
\end{equation}
\begin{equation}\label{222}
	(-1)^m\phi_{n}^{(2m)}(x,q_2)+ q_
	2(x) \phi_{n}(x,q_2) = \lambda_n(q_2) \phi_{n}(x,q_2).
\end{equation}
Multiplying \eqref{111} by $\phi_{n}(x,q_2)$ and \eqref{222} by $\phi_{n}(x,q_1)$, subtracting the two equations, and taking the relation
\begin{equation}\label{q}
\lambda_n(q_1)={\lambda}_n(q_2) \text { for all } n,
\end{equation}
into account, we obtain
\begin{equation*}
	(\!-1)^m(\phi_{n}^{(2m)}\!(x,q_1)\phi_{n}(x,q_2)- \phi_{n}^{(2m)}(x,q_2)\phi_{n}(x,q_1))\!\!=\!\! (q_2(x)-q_1(x))\phi_{n}(x,q_1)\phi_{n}(x,q_2).
\end{equation*}
Integrating the both sides from $0$ to $\pi$ by parts and using the boundary conditions \eqref{boud1}, we derive
\begin{equation} \label{eqlan}
\int_{0}^{\pi}(q_1(x)-q_2(x))\phi_{n}(x,q_1)\phi_{n}(x,q_2)dx=0.
\end{equation}
Analogously, since
	\begin{equation*} 
	\mu_n(q_1)={\mu}_n(q_2) \text { for all } n,
	\end{equation*}
we have
	\begin{align}\label{44}
	\int_{0}^{\pi}(q_1(x)-q_2(x))\psi_{n}(x,q_1)\psi_{n}(x,q_2)dx=0.
	\end{align}
Using \eqref{lambda} and \eqref{q}, we derive
\begin{align*}
	\frac{1}{\pi}\int_{0}^{\pi}(q_1(x)-q_2(x))dx-\frac{1}{\pi}\int_{0}^{\pi}(q_1(x)-q_2(x))\cos (2n x)dx+O\bigg(\frac{1}{n^2}\bigg)=0.
\end{align*}
Let $n\rightarrow\infty$ in the above relation. Since $q_1(x)$, $q_2(x)\in L^{2}(0,\pi)$, then, according to Riemann-Lebesgue's theorem, we get
\begin{equation}\label{int}
	\int_{0}^{\pi}(q_1(x)-q_2(x))dx=0.
\end{equation}
From $\eqref{eqlan}$ and $\eqref{int}$, we obtain
\begin{equation}\label{9}
		\sqrt{2\pi}\int_0^\pi({q_1(x)-q_2(x)})\bigg(\frac{1}{\pi}-\phi_n(x;q_1){\phi}_n(x;q_2)\bigg)dx=0.
\end{equation}
Similarly, the relations $\eqref{44}$ and $\eqref{int}$ imply
\begin{equation}\label{99}
	\sqrt{2\pi}\int_0^\pi({q_1(x)-q_2(x)})\bigg(\frac{1}{\pi}-\psi_n(x;q_1){\psi}_n(x;q_2)\bigg)dx=0.
\end{equation}
 Thus, by Lemma \ref{riesz}, we have $$\big\{1\big\}\cup\bigg\{\sqrt{2\pi}\bigg(\frac{1}{\pi}-\phi_n(x;q_1){\phi}_n(x;q_2)\bigg),\sqrt{2\pi}\bigg(\frac{1}{\pi}-\psi_n(x;q_1){\psi}_n(x;q_2)\bigg)\bigg\}_{n\geq 1}$$ is a Riesz basis in $L^2(0,\pi)$. Therefore, the relations \eqref{int}, \eqref{9}, and \eqref{99} imply that $q_1(x)=q_2(x)$ a.e. on $(0,\pi)$. 
\end{proof}

\vspace{1mm}
\th{Conflict of Interest} {\rm The authors declare no conflict of interest.}
\par\vspace{1mm}

\th{Acknowledgements} {\rm Ai-wei Guan and Chuan-fu Yang were supported by the National Natural Science Foundation of China (Grant No. 11871031) and by the Natural Science Foundation of the Jiangsu Province of China (Grant No. BK 20201303).} 

\end{document}